\numberwithin{equation}{section}
\newtheorem{thm}{\hspace{1em}Theorem}
\newtheorem{ass}{\hspace{1em}Assumption}
\newtheorem{lem}{\hspace{1em}Lemma}
\newtheorem{rem}{\hspace{1em}Remark}
\newtheorem{defi}{\hspace{1em}Definition}
\newtheorem{Prop}{\hspace{1em}Proposition}
\newcommand{\mR}{\mathbb{R}}
\newcommand{\G}{\mathcal{G}}
\newcommand{\V}{\mathcal{V}}
\newcommand{\E}{\mathcal{E}}
\newcommand{\mL}{\mathcal{L}}
\newcommand{\K}{\mathcal{K}}
\newcommand{\N}{\mathcal{N}}
\newcommand{\x}{\mathrm{x}}
\newcommand{\y}{\mathrm{y}}
\newcommand{\Y}{\mathrm{Y}}
\newcommand{\z}{\mathrm{z}}
\newcommand{\Z}{\mathrm{Z}}
\newcommand{\U}{\mathbf{U}}
\newcommand{\mH}{\mathbf{H}}
\newcommand{\D}{\mathbf{D}}
\newcommand{\1}{\mathbf{1}_m}
\newcommand{\dist}{\mathrm{dist}}
\newcommand{\mP}{\mathcal{P}}
\newcommand{\M}{\mathcal{M}}
\newcommand{\tabincell}[2]{\begin{tabular}{@{}#1@{}}#2\end{tabular}} 
\title{\textbf{Decentralized Proximal Method of Multipliers for Convex Optimization with Coupled Constraints}}
\author[1]{Kai Gong }
\author[1]{Liwei Zhang}
\affil[1]{School of Mathematical Sciences, Dalian University of Technology, Dalian,
          116024, Liaoning, China. \authorcr Email: gk1995\_\_\_@mail.dlut.edu.cn
      \authorcr Email: lwzhang@dlut.edu.cn}
\date{}
\begin{document}
	\maketitle
	\begin{abstract}
		In this paper, a decentralized proximal method of multipliers (DPMM) is 
		proposed to solve constrained convex optimization problems 
		over multi-agent networks, where the local objective of each agent is a general closed convex function, and the constraints are coupled equalities and inequalities. This algorithm strategically integrates the dual decomposition method and the proximal point algorithm. One advantage of DPMM is that
	    subproblems can be solved inexactly and in parallel by agents at each iteration, 
	    which relaxes the restriction of requiring exact solutions to subproblems in many distributed constrained optimization algorithms. We show that the first-order optimality residual of the proposed algorithm decays to $0$ at a rate of $o(1/k)$ under general convexity. Furthermore, if a structural assumption for the considered optimization problem is satisfied, the sequence generated by DPMM converges linearly to an optimal solution. In numerical simulations, we compare DPMM with several existing algorithms using two examples to demonstrate its effectiveness.\\
		
		\noindent{\bf Keywords}:\ Distributed convex optimization, optimization 
		algorithm, sublinear convergence rate, linear convergence rate.
	\end{abstract}
	\section{Introduction}
	Distributed optimization algorithms decompose an optimization problem into smaller, more manageable subproblems that can be solved in parallel by a group of agents or processors. All agents communicate peer-to-peer and compute locally, cooperating to minimize the global objective function. Consequently, distributed optimization algorithms are widely used to solve large-scale problems in wireless communication, optimal control, machine learning, etc. Many problems of interest can be formulated as the following distributed constrained optimization model
	\begin{equation}\tag{\bf P}\label{P}
		\begin{aligned}
			&\min_\x F(\x)\triangleq\sum_{i=1}^mf_i(\x_i)\\
			{\rm s.t.}\quad& \sum_{i=1}^m\underbrace{\left[
				\begin{array}{c}
					A_i\x_i-b_i\\
					g_i(\x_i)
				\end{array}\right]}_{\triangleq \ G_i(\x_i)}
			\in\underbrace{\left[
				\begin{array}{c}
					0_p\\
					\mR^q_-
				\end{array}\right]}_{\triangleq \ \K},\\
			& \x_i\in\Omega_i, \ i=1,2,\dots,m,
		\end{aligned}
	\end{equation}
	where $\x_i\in\mR^{n_i}$ is the local decision variable controlled by agent $i$, 
	$\x=\mathrm{col} (\x_1,\x_2,\dots,\x_m)\in\mR^n$, $n=\sum_{i=1}^mn_i$. The objective 
	$f_i:\mR^{n_i}\to\mR$, constraint function $h_i:\mR^{n_i}\to\mR^q$, $A_i\in\mR^{p\times n_i}$, $b_i\in\mR^p$, and local constraint subset $\Omega_i\subseteq\mR^{n_i}$. The local information $\mathcal{F}_i=\left\{f_i, A_i,b_i,h_i,\Omega_i\right\}$ can only be known by agent $i$, but none of the other agents have access to this information. We aim to develop a distributed algorithm to solve \eqref{P}, which means that all agents can only access and process local data, and communicate only with their immediate neighbors. Through exchanging information, they cooperate to find an optimal solution to \eqref{P}.
	
	Various distributed algorithms have been proposed to solve optimization problems 
	with coupled constraints. The coupled linear
	equality constrained optimization problems were widely studied at first and probably 
	inspired by distributed resource allocation problems (e.g., \cite{Nedic18, Li20}). 
	The augmented Lagrangian method (ALM) and the alternating direction method of multipliers (ADMM) are classical and efficient algorithms for equality constrained convex optimization problems in centralized environments. These two
	algorithms are extended to solve distributed optimization problems with coupled linear constraints. To name a few, 
	Tracking-ADMM \cite{Falsone20}, Non-Ergodic Consensus-Based Primal-Dual Algorithm 
	\cite{Su22} (NECPD), Consensus-Based Distributed Augmented Lagrangian 
	Method \cite{Zhang18} (C-ADAL), Primal-Dual Consensus ADMM 
	\cite{Chang16} (PDC-ADMM). Among the above listed algorithms, Tracking-ADMM is 
	asymptotically convergent, while NECPD, C-ADAL, and PDC-ADMM all have
	$O(1/k)$ convergence rates, in terms of the objective residual and feasibility. The first-order optimality residual of Mirror-P-EXTRA proposed by \cite{Nedic18} decays to $0$ at a rate of $o(1/k)$.
	
	Existing methods for tackling coupled nonlinear inequality constraints can be 
	classified into two categories: primal decomposition and dual decomposition. The 
	primal decomposition method introduces artificial variables to transform the coupled 
	inequality constraints into a more manageable form. For instance, the authors of 
	\cite{Andrea21} transformed the coupled inequality constrained convex optimization 
	into a bi-level convex programming problem. Their proposed distributed primal-dual 
	algorithm has a convergence rate of $O(1/\log(k))$ for the objective residuals. 
	Similarly, the authors of \cite{WuXY22} employed artificial variables to decouple the 
	inequality constraints, at the cost of adding a coupled linear equality constraint working on the auxiliary variables. Their IPLUX algorithm achieves a convergence rate of $O(1/k)$ in terms of feasibility and optimality. On the other hand, the dual decomposition method utilizes Lagrange duality to establish a consensus-based dual 
	problem. Various distributed consensus-based algorithms can then be integrated to 
	solve the dual problem. For example, the distributed dual subgradient algorithm proposed by \cite{Falsone17,Liang21} has an ergodic $O(1/k)$ convergence rate. The authors of \cite{Liu20} integrated the dual gradient tracking method (see \cite{DIGing}) and a primal recovery technique to develop a primal-dual algorithm with objective residuals decaying to $0$ at a rate of $O(1/k)$. The work of \cite{Liang20} focused on smooth convex optimization problems and proposed a distributed algorithm that performs two successive gradient projection steps in each round of iteration for a min-max problem formulated by the dual decomposition. They showed that their algorithm with a small fixed stepsize asymptotically converges to a saddle point. Recently, the authors of \cite{AL-Tracking} proposed an augmented Lagrangian tracking (AL-tracking) method, and they demonstrated that any cluster of the primal decision sequences generated by the AL-tracking algorithm is an optimal solution.
	
		\begin{table}[t]
		\centering
		\begin{threeparttable}
			\caption{An overview of some existing distributed coupled constrained optimization algorithms.\\ \\
				\hspace*{0.5em}
				The full names of the abbreviations in this table are 
				listed there: C (convex), SC (strongly convex), NSO (non-smooth 
				optimization), E (equality),
				I (inequality (probably nonlinear)), OR (objective residual),
				FR (feasibility residual), 
				FOOR (first-order optimality residual).
			}
			\label{table_1}
			\begin{small}
				\begin{tabular}{|c|c|m{2.8cm}<{\centering}|m{0.8cm}<{\centering}|c|c|c|c|c|}
					\hline
					\multirow{2}{2cm}{Algorithms}
					&\multirow{2}{*}{NSO}&\multirow{2}{*}
					{\makecell[l]{Communications}}&
					\multicolumn{2}{c|}{\makecell[l]{Coupled\\constraints}}&
					\multirow{2}{2.0cm}{Convergence of $\{\x^k\}_{k=0}^\infty$
						\tnote{$\dagger$}}&\multicolumn{3}{c|}{\makecell{Convergence rate}}\\
					\cline{4-5}\cline{7-9}
					~&~&~& E & I & ~ &\multicolumn{2}{c|}{C} & SC  \\ 
					\hline
					\makecell{NECPD\\ \cite{Su22}} & $\surd$ 
					& $(d-1)$\tnote{$\dagger\dagger$} &
					$\surd$ & $\times$ & {\bf---} &\makecell{OR \&  FR}  
					& $O(1/k)$ &{\bf---} \\
					\hline
					\makecell{Tracking-\\ADMM\\ \cite{Falsone20}} & $\surd$ & 1
					& $\surd$ & $\times$& $\times$&\multicolumn{3}{c|}{{\bf---}} \\
					\hline
					\makecell{AL-Tracking \\ \cite{AL-Tracking}} & $\surd$ & 1 &
					$\surd$ & $\surd$ & $\times$ & \multicolumn{3}{c|}{{\bf---}}\\
					\hline
					\makecell{PDC-\\ADMM\\ \cite{Chang16}} & $\surd$ & 2&
					$\surd$ & $\times$ & $\surd$ & \makecell{OR \&  FR}  
					& $O(1/k)$ &{\bf---} \\
					\hline
					\cite{Liang20}  &$\times$ & 2
					&$\surd$& $\surd$& $\surd$&
					\multicolumn{3}{c|}{{\bf---}} \\
					\hline
					\makecell{IPLUX\\ \cite{WuXY22}} &$\surd$& 2 & $\surd$&
					$\surd$& {\bf---} & \makecell{OR \& FR} & $O(1/k)$ &{\bf---}\\
					\hline
					\cite{Li20} \tnote{$\dagger\dagger\dagger$} & $\times$ 
					& 1 & $\surd$ & $\times$ & $\surd$ 
					& \multicolumn{2}{c|}{{\bf ---}} & linear \\
					\hline
					\makecell{Mirror-\\P-EXTRA\\ \cite{Nedic18}} & $\surd$ & 1 &
					$\surd$ & $\times$ & $\surd$ & FOOR & $o(1/k)$ & linear \\
					\hline
					\makecell{This\\ paper} & $\surd$ & 1 &$\surd$&$\surd$&
					$\surd$ & FOOR &  $o(1/k)$ & linear \\
					\hline
				\end{tabular}
			\end{small}
			\begin{tablenotes}
				\footnotesize
				\item[$\dagger$] $\{\x^k\}_{k=0}^\infty$ is the decision
				variable sequence of agents, generated by each algorithm in this table.
				
				\item[$\dagger\dagger$] $d$ is the degree of the minimal 
				polynomial of the adjacency matrix $W$.
				
				\item [$\dagger\dagger\dagger$] The algorithm of \cite{Li20}
				can be applied to time-varying directed graphs, and other algorithms in this table employ a fixed undirected graph. 
			\end{tablenotes}
		\end{threeparttable}
	\end{table}
	
	In this paper, we will apply the dual decomposition method twice successively to handle coupled constraints and the consensus dual problem, to construct a Karush-Kuhn-Tucker (KKT) system of \eqref{P} that is separable with respect to the primal decision variables and the Lagrange multipliers respectively. This KKT system would be treated as an inclusion problem of a maximal monotone operator that can employ the proximal point method -- an efficient algorithm in a centralized environment, to solve it. 
	By strategically integrating ideas from the variable metric proximal point method and the prediction-correction framework presented by \cite{He22}, 
	we propose a decentralized proximal method of multipliers (DPMM) for this monotone inclusion problem, which is also a distributed algorithm for \eqref{P}.
	
	In the DPMM algorithm, each agent controls three variables: the primal decision variable, a local estimate (copy) of the Lagrange multiplier corresponding to the coupled constraints, and an introduced auxiliary variable. The local estimates of the Lagrange multiplier enable the algorithm to be executed in a distributed fashion, while the auxiliary variables push agents' local estimates to be consensual. During each round of iteration, all agents communicate with neighbors for only one round, exchanging only the values of agents' local estimates of the Lagrange multiplier. The contribution of this work to the research of distributed constrained optimization is four-fold. 
	\begin{enumerate}[\hspace{1em}$\bullet$]
		\item The decentralized proximal method of multipliers proposed in this article solves a challenging family of constrained convex optimization problems, where constraints (including equality and nonlinear inequalities) are coupled, the objective and constraint functions can be general closed convex (probably 
		non-smooth), and the local constraint sets of agents are closed convex and not required to be bounded. The assumptions of a bounded feasible region and smooth objective and constraint functions, either of which is an indispensable condition for many distributed constrained optimization algorithms to guarantee convergence, e.g., \cite{Zhang18, Chang16, 
			Liu20, Falsone20, Falsone17, Su22, Andrea21, Liang20}; 
		\item DPMM is a primal-dual convergent method. The sequences of primal decision variables and Lagrange multipliers generated by DPMM converge to a primal-optimal and dual-optimal solution, respectively. In contrast, some algorithms in the literature have only dual convergence, while the convergence of their generated primal decision sequence is either
		unknown (e.g., \cite{Liu20, Andrea21, Zhang18, WuXY22})
		or each cluster is a primal optimal solution
		(e.g., \cite{Falsone17, Falsone20, AL-Tracking}). In addition, as shown in Table {\bf\ref{table_1}}, DPMM has a lower communication burden compared with some other existing algorithms. 
		
		\item In this paper, an inexact version of the decentralized proximal method of multipliers (inexact-DPMM) is presented to relax the restriction that the minimization subproblems are solved exactly by agents at each iteration (e.g., \cite{WuXY22, Falsone20, Falsone17, Su22, Zhang18, Liang20, Liu20}). Inexact-DPMM is still convergent to an optimal solution of \eqref{P}, and if the precision of minimizing subproblem geometrically decays to $0$, its first-order optimality residual then converges to zero at a rate of $o(1/k)$.
		
		\item Under a structural assumption that includes strong convexity
		of the objective function, we demonstrate that inexact-DPMM has a linear convergence rate. Many other distributed coupled constrained optimization algorithms in literature do not have results about the linear convergence rate, for a survey, see Table {\bf\ref{table_1}}.
	\end{enumerate}
	
	The rest of the paper is organized as follows: In Section \ref{sec_2}, we state some standard assumptions about the network model of agents and the considered optimization problems, respectively. In Section \ref{sec_3}, we utilize the dual decomposition method to reformulate the KKT system of the problem \eqref{P} and present the distributed proximal method of multipliers, including an inexact version. In Section \ref{sec_4}, we analyze the convergence and rate of inexact-DPMM. In Section \ref{sec_5}, we numerically simulate DPMM for solving two examples and compare it with some algorithms in Table {\bf\ref{table_1}} to illustrate its effectiveness. In section \ref{sec_6}, we make a conclusion. 
	
	Notation: Let $\mR^n$ be the $n$-dimensional Euclidean space with the inner 
	product $\langle\cdot,\cdot\rangle$ and norm $\Vert\cdot\Vert$. We use $\1$ to 
	denote an $m$-dimensional vector whose entries are all ones, and $I_p$ to denote the identity matrix with dimension $p$. The notation ${\rm col}(\x,\y)$ means a column 
	vector with $\x$ and $\y$ as its components. Suppose $\Omega$ is a nonempty subset of $\mR^n$. We write ${\rm int}(\Omega)$ as its interior. $\delta_{\Omega}(\x)$ is the indicator function, which is $0$ if $\x\in\Omega$, and $+\infty$ otherwise. 
	The symbol $\mP_\Omega(\x)$ stands for the projection of $\x$ onto $\Omega$. If $\K$ 
	is a closed convex cone in $\mR^n$, we denote its polar by $\K^\circ$. Let $U$ be an 
	$n\times m$ matrix. Its transpose is denoted by $U^\top$. The notation $U\succ0$ 
	means that the matrix $U$ is symmetric positive definite. The null space of $U$, i.e., the set of all matrices $X\in\mR^{m\times r}$ such that $UX=0$, is denoted by ${\rm null}\{U\}$. The notation ${\rm diag}(A,B)$ defines a block diagonal matrix with $A$ and $B$ as its diagonal blocks. We use $\otimes$ to denote the Kronecker product. Suppose $\mH$ is a symmetric positive definite matrix, we define the $\mH$-matrix norm of a vector $\x$ as $\Vert\x\Vert_\mH=\sqrt{\langle\x,\mH\x\rangle}$. 
	$\lambda_{\max}(\mH)$ and $\lambda_{\min}(\mH)$ denote the maximum and minimum eigenvalue of the matrix $\mH$, respectively.
	To avoid confusion, we use the notation $\x^k$ for the value of $\x$ at the $k$-th iteration, and $(\x)^s$ for the $s$-th power of $\x$.
	
	\section{Network Moldel and Assumptions}\label{sec_2}
	
	Consider a multi-agent network consisting of $m$ agents, we model it as an undirected graph $\G=(\V,\E)$, where $\V=\{1,2,\dots,m\}$ is the sets of nodes (i.e., agents), and $\V$ is the set of edges. An edge $(i,j)\in\E$ if and only if agent $i$ and agent $j$ can communicate and exchange information with each other. We define the index subset of neighbors of agent $i$ as $\N_i=\{j\in\V: (i,j)\in\E\}$, for all $i\in\V$. The adjacency matrix $W$ of the graph $\G$ satisfies $W_{ij}=1$, if $(i,j)\in\E$, and $W_{ij}=0$ otherwise. Let $\text{\L}_{\G}$ denote the Laplacian matrix which is compatiable with $\G$, i.e., $\text{\L}_{\G}=D-W$, where $D$ is a diagonal matrix with the entry $D_{ii}$ being the degree of agent $i$. A few facts about
	$\text{\L}_{\G}$ are that $\text{\L}_{\G}$ is symmetric and positive semidefinite and
	satisfies ${\rm null}\{\text{\L}_{\G}\}=\{t\1: t\in\mR\}$.
	
	In this article, we use a matrix $\text{\L}=[\text{\L}_{ij}]$ that can have more choices, which plays a role close to or the same as $\text{\L}_{\G}$.
	\begin{ass}[Graph connectivity]\label{ass_1}
		The undirected graph $\G$ is connected and a matrix {\rm\L} is compatible
		with $\G$. Furthermore, $\text{\rm\L}=U^\top U$ for some full row-rank matrix $U$
		and $U\1=0$.
	\end{ass}
	The connectivity assumption of $\G$ is standard for distributed optimization. 
	Reference \cite{PG-EXTRA} uses $U=\sqrt{\text{\L}}$, while we use the decomposition
	$\text{\L}=U^\top U$. The null properties of \L \ and $U$ in Assumption 
	{\bf\ref{ass_1}} imply that ${\rm null}\{\text{\L}\}={\rm null}\{U\}$. Let 
	\begin{equation}\label{UL}
		\textbf{\L}\triangleq\text{\L}\otimes I_{p+q}\ , \quad \U\triangleq U\otimes I_{p+q}\ .
	\end{equation}
	Thus we have 
	\begin{equation}\label{consensus}
		\y_1=\y_2=\cdots=\y_m\Longleftrightarrow 
		\textbf{\L}\Y=0\Longleftrightarrow \U\Y=0,
	\end{equation}
	where $\Y=\mathrm{col}\left(\y_1,\y_2,\dots,\y_m\right)\in\mR^{m(p+q)}$.
	Relation \eqref{consensus} is a common technique for distributed algorithms that employ the dual decomposition method, and it plays a key role in extending many centralized algorithms to distributed settings.
	
	\begin{rem}\label{rem_1}
		The matrix {\rm\L} can be chosen in several different ways:
		\begin{enumerate}[{\bf (a)}]
			\item Since the Laplacian matrix $\text{\rm\L}_{\G}$ satisfies Assumption {\bf\ref{ass_1}}, we can choose $\text{\rm\L}=\text{\rm\L}_{\G}$. In this case, each agent needs to know the number of its neighbors \textit{(its degree)} and 
			{\rm\L} can be constructed without any communication among the agents.
			
			\item We can choose $\text{\rm\L}=(I-W)/\nu$ {\rm\cite{EXTRA,Xiao04,Sayed14}},
			where $\nu>0$ is a scaling parameter, and $W$ is a symmetric doubly stochastic matrix that is compatible with $\G$. This matrix can be constructed using some
			local strategies such as the Metropolis-Hasting rule {\rm\cite{Basar16}}.
		\end{enumerate}
	\end{rem}
	\begin{ass}[Convexity and existence of optimal solution]\label{ass_2}
		\hspace{1em}
		\begin{enumerate}[{\bf(a)}]
			\item \eqref{P} is a convex problem, i.e., each $f_i$ and $h_i$ 
			\textit{(vector-valued)} are general proper closed convex functions \textit{(probably non-smooth)}, and each $\Omega_i$ is an empty closed convex subset in $\mR^{n_i}$, for all $i=1,2,\dots,m$.
			\item The optimal solution set $\mathcal{X}^*$ of \eqref{P} is nonempty.
		\end{enumerate}
	\end{ass}
	Note that we do not require the boundedness of the local constraint subsets
	$\Omega_i$, $i=1,2,\dots,m$. However, many distributed constrained optimization algorithms in the literature rely on the boundedness (or compactness) of $\Omega_i$'s to bound the (sub)gradients $\nabla f_i$ (or $\partial f_i$) and the primal decision sequence $\{\x^k\}_{k\ge0}$ that they generate.
	
	Let $\K^\circ=\mR^p\times\mR^q_+$, the Lagrange dual problem of \eqref{P} is then
	formulated as
	\begin{equation}\tag{\bf D}\label{D}
		\max_{\y\in{\mR^{p+q}}}\varphi(\y)\triangleq\sum_{i=1}^m\varphi_i(\y),
	\end{equation}
	where 
	\begin{subequations}\label{Fun_Lag}
		\begin{align}
			\varphi_i(\y)&=\inf_{\x_i}\ell_i(\x_i,\y),\\
			\ell_i(\x_i,\y)&=f_i(\x_i)+\y^\top
			G_i(\x_i)+\delta_{\Omega_i}(\x_i)-\delta_{\K^\circ}(\y),\\
			\ell(\x,\y)&=\sum_{i=1}^m\ell_i(\x_i,\y).
		\end{align}
	\end{subequations}
    For constructing a primal-dual method, a basic assumption is that strong duality holds, which is guaranteed by the following standard assumption.
	\begin{ass}[Slater's condition]\label{ass_3}
		There exists a point $\bar{\x}=\mathrm{col}(\bar{\x}_1,\bar{\x}_2,\dots,
		\bar{\x}_m)$ such that $\bar{\x}_i\in\mathrm{int}\left(\Omega_i\right)$,
		$\forall\ i=1,2,\dots,m$, and $\sum_{i=1}^m\left(A_i\bar{\x}_i-b_i\right)=0,\ 
		\sum_{i=1}^mh_i\left(\bar{\x}_i\right)<0$.
	\end{ass}
	When using the dual decomposition to decouple the constraints in \eqref{P}, a dual variable $\y$ is introduced, while it's still coupled in \eqref{D}. To decouple $\y$, we need the following separable Lagrange function
	\begin{equation}\label{eq_dis-Lagrange}
		\mL(\x,\Y)\triangleq\sum_{i=1}^m\ell_i(\x_i,\y_i), \quad \x_i\in\mR^{n_i},\ 
		\y_i\in\mR^{p+q},
	\end{equation}
	where $\Y=\mathrm{col}(\y_1,\y_2,\dots,\y_m)$, and $\y_i$ is a local estimate (copy) 
	of the Lagrange multiplier $\y$, owned by agent $i$, for all $i=1,2,,\dots,m$. 
	It is clear that if all the estimates $\y_i$'s are identical, i.e., $\Y=\1\otimes\y$ for some $\y$, then 
	$\mL(\x,\Y)=\ell(\x,\y)$. Applying the consensus relation \eqref{consensus}, we 
	obtain the following first-order optimality conditions for \eqref{P}.
	\begin{lem}[First-order optimality conditions for \eqref{P}]\label{LEM_1}
		Let Assumptions {\bf\ref{ass_1}, \ref{ass_2}}, and {\bf\ref{ass_3}} hold. Then,	
		$\x$ and $\y$ are optimal solutions to \eqref{P} and \eqref{D} respectively,
		if and only if there exist 
		$\Z=\mathrm{col}\left(\z_1,\z_2,\dots,\z_m\right)\in\mR^{m(p+q)}$,
		and $\Y=\1\otimes\y$, such that 
		\begin{equation}\label{FOOC}
			\mathbf{0}\in\Phi\left(\x,\Y,\Z\right)
			\triangleq\left(
			\begin{array}{c}
				\partial_\x\mL(\x,\Y)\\
				-\partial_\Y\mL(\x,\Y)+\U^\top\Z\\
				-\U\Y
			\end{array}\right),
		\end{equation}
		where $\U$ is the matrix defined by \eqref{UL}.
	\end{lem}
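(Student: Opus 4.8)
The plan is to establish the equivalence in two stages: first reduce optimality of $(\x,\y)$ for \eqref{P}--\eqref{D} to the classical (centralized) KKT/saddle-point system, and then show that this centralized system is equivalent to the distributed inclusion $\mathbf{0}\in\Phi(\x,\Y,\Z)$ through the consensus relation \eqref{consensus} and the null-space/range structure of $\U$ supplied by Assumption \ref{ass_1}.

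For the first stage I would invoke strong duality. Assumption \ref{ass_2} makes \eqref{P} a convex program and Assumption \ref{ass_3} (Slater) guarantees strong duality, so $\x$ is optimal for \eqref{P} and $\y$ is optimal for \eqref{D} if and only if $(\x,\y)$ is a saddle point of $\ell(\x,\y)$, equivalently the stationarity conditions
\begin{equation*}
\mathbf{0}\in\partial_\x\ell(\x,\y),\qquad \mathbf{0}\in\partial_\y\ell(\x,\y)
\end{equation*}
hold, where $\partial_\y$ is read as the superdifferential of the concave map $\ell(\x,\cdot)$. I would note that $\partial_\y\ell(\x,\y)=\sum_{i=1}^m G_i(\x_i)-\partial\delta_{\K^\circ}(\y)$, so the second inclusion encodes primal feasibility $\sum_i G_i(\x_i)\in\K$, complementary slackness, and dual feasibility $\y\in\K^\circ$. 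This is standard convex duality and will be cited rather than reproved.

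For the second stage, observe that the third block of $\Phi$ forces $\U\Y=0$, which by \eqref{consensus} is exactly $\Y=\1\otimes\y$; under this identification $\mL(\x,\Y)=\ell(\x,\y)$, and separability gives $\partial_\x\mL(\x,\Y)=\partial_\x\ell(\x,\y)$, so the first block reproduces primal stationarity. The content is therefore concentrated in the second block: I must show that there exists $\Z$ with $\U^\top\Z\in\partial_\Y\mL(\x,\1\otimes\y)$ if and only if $\mathbf{0}\in\partial_\y\ell(\x,\y)$. The forward direction is easy: reading the block agent-wise gives $(\U^\top\Z)_i\in\partial_\y\ell_i(\x_i,\y)$; summing over $i$ and using $U\1=0$, which makes $\sum_i(\U^\top\Z)_i=0$, together with the (always valid) inclusion $\sum_i\partial_\y\ell_i\subseteq\partial_\y\ell$, yields $\mathbf{0}\in\partial_\y\ell(\x,\y)$.

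The main obstacle is the converse: given $\mathbf{0}\in\partial_\y\ell(\x,\y)$ I must produce a suitable $\Z$. Here I would first upgrade the sum rule to an \emph{equality}, $\partial_\y\big(\sum_i\ell_i(\x_i,\cdot)\big)=\sum_i\partial_\y\ell_i(\x_i,\cdot)$, which holds because every $\ell_i(\x_i,\cdot)$ has effective domain $\K^\circ$ and $\mathrm{int}(\K^\circ)\neq\varnothing$ supplies the required constraint qualification. This lets me write $\mathbf{0}=\sum_{i=1}^m w_i$ with $w_i\in\partial_\y\ell_i(\x_i,\y)$. Setting $W=\mathrm{col}(w_1,\dots,w_m)$, the relation $\sum_i w_i=0$ means $W$ is orthogonal to $\mathrm{null}\{\U\}=\{\1\otimes\y\}$ from \eqref{consensus}; hence $W\in\mathrm{range}(\U^\top)$ and there is $\Z$ with $\U^\top\Z=W$, so $(\U^\top\Z)_i=w_i\in\partial_\y\ell_i(\x_i,\y)$, i.e. $\U^\top\Z\in\partial_\Y\mL(\x,\Y)$. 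Combining the two blocks with the consensus constraint yields the claimed equivalence; the delicate points to get right are the exactness of the subdifferential decomposition — which rests on the functions $\ell_i(\x_i,\cdot)$ sharing the effective domain $\K^\circ$ with nonempty interior — and the null-space/range duality for $\U$ furnished by Assumption \ref{ass_1}.
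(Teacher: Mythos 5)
Your proof is correct and follows essentially the same route as the paper's: reduce to the centralized saddle-point/KKT system via strong duality, then pass to the distributed inclusion $\mathbf{0}\in\Phi(\x,\Y,\Z)$ using the consensus relation $\U\Y=0$ together with the duality $\mathrm{null}\{\U\}^\perp=\mathrm{span}\{\U^\top\}$ to construct $\Z$ in the converse direction. The only notable difference is that you explicitly justify the exactness of the sum rule $\partial_\y\ell(\x,\y)=\sum_{i=1}^m\partial_\y\ell_i(\x_i,\y)$ via the shared effective domain $\K^\circ$ with nonempty interior, a point the paper's proof takes for granted when writing its centralized optimality system \eqref{FOOC_P}.
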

	\begin{proof}
		From the Lagrange function $\ell(\x,\y)$, the first-order optimality
		conditions of \eqref{P} can be formulated as
		\begin{equation}\label{FOOC_P}
			\mathbf{0}\in\partial\ell(\x,\y)=\left(
			\begin{array}{c}
				\big(\partial_{\x_i}\mathit{l}_i(\x_i,\y)\big)_{i=1}^m\\
				-\sum_{i=1}^{m}\partial_\y\mathit{l}_i(\x_i,\y)
			\end{array}\right).
		\end{equation}
		We then need to show the equivalence between \eqref{FOOC_P} and \eqref{FOOC}.
		
		Suppose that $0\in\Phi(\x,\Y,\Z)$, then $\U\Y=0$ implies that there exists
		$\y$ such that $\Y=\1\otimes\y$, which follows $\mL(\x,\Y)=\mathit{l}(\x,\y)$.
		Hence it holds that $0\in\partial_\x\mathit{l}(\x,\y)$. In view of $\U\1=0$,
		multiplying $\1^\top$ by $0\in\left(-\partial_\Y\mL(\x,\Y)+\U^\top\Z\right)$
		yields
		\begin{equation*}
			0\in-\1^\top\partial_\Y\mL(\x,\Y)=
			-\sum_{i=1}^m\partial_{\y_i}\mathit{l}_i(\x_i,\y_i)
			=-\sum_{i=1}^m\partial_\y\mathit{l}_i(\x_i,\y).
		\end{equation*}

		Conversely, assume that $0\in\partial\mathit{l}(\x,\y)$.
		Let $\Y=\1\otimes\y$, it follows that $-\U\Y=0$ and 
		$\mL(\x,\Y)=\mathit{l}(\x,\y)$, thus $0\in\partial_\x\mL(\x,\Y)$
		and $0\in-\1^\top\partial_\Y\mL(\x,\Y)$, i.e., there exists  $V\in\partial_\Y\mL(\x,\Y)$ such that $\1^\top V =0$. We then have
		\begin{equation*}
			V\in\mathrm{span}\{\1\}^\perp=\mathrm{null}\{\U\}^\perp
			=\mathrm{span}\{\U^\top\},
		\end{equation*}
		which implies that $V=U^\top \Z$ for some $\Z$. Combining with 
		$V\in\partial_\Y\mL(\x,\Y)$, it holds that 
		$0\in-\partial_\Y\mL(\x,\Y)+\U^\top\Z$.
	\end{proof}
	Separable Lagrange function $\mL(\x,\Y)$ enables us to decompose \eqref{P} so that it can be processed in parallel by agents. Simultaneously, the role of variable $\Z$ is to ensure that all the estimates $\y_i$'s of the Lagrange multiplier $\y$ reach a consensus among agents.
	
	\section{Decentralized Proxmal Method of Multipliers}\label{sec_3}
	
	In Lemma {\bf\ref{LEM_1}}, we introduce an operator $\Phi$ to characterize the first-order optimality conditions of \eqref{P}. The operator $\Phi$ is maximal monotone by Minty’s theorem \cite[Theorem 21.1]{convex17}. To find a solution
	to this monotone inclusion problem of the operator $\Phi$, 
	the authors of References \cite{Corman14,Eckstein92,Tao18} developed the 
	following generalized proximal point algorithm
	\begin{subequations}\label{g_PPA}
		\begin{align}
			\widehat{\xi}^k&=(I+s\Phi)^{-1}(\xi^k),\\
			\xi^{k+1}&=\xi^k-r\left(\xi^k-\widehat{\xi}^k\right),
		\end{align}
	\end{subequations}
	where $s>0$ and $r\in(0,2)$. This algorithm has an optimal linear convergence rate,
	as shown in \cite{Tao18}. Althrough the algorithm \eqref{g_PPA} can theoretically find a solution $\xi^*$ satisfying $0\in\Phi(\xi^*)$, the computation cost of the proximal operator $(I+s\Phi)^{-1}$ is very expensive. We aim to develop a decentralized algorithm that decomposes the primal problem \eqref{P} into smaller subproblems and is then solved by a group of agents through communication and parallel computation.
	
	On the other hand, the authors of \cite{Ma20, He20} incorporated ideas from the unified prediction-correction framework presented by \cite{He22} and ADMM (or ALM), which is essentially a proximal point method for multipliers (see \cite{Rockafellar76}), to develop new algorithms with larger stepsizes or domains of convergence. Generally, algorithms with larger step-sizes often imply faster convergence. Under these observations, we attempt to integrate the generalized proximal point algorithm with the prediction-correction framework to propose the following decentralized proximal method of multipliers.
	To simplify our presentation, let us define the quantities:
	\begin{align*}
		\xi^k\triangleq\left(
		\begin{array}{c}
			\x^k\\
			\Y^k\\
			\Z^k
		\end{array}\right),\
		Q\triangleq\left(
		\begin{array}{ccc}
			\Upsilon^{-1} & 0 & 0\\
			0   & \Gamma^{-1} & -\U^\top\\
			0   &  0   & \frac{1}{\beta}I_{m(p+q)}
		\end{array}\right),\
		M\triangleq\left(
		\begin{array}{ccc}
			\Theta & 0 & 0\\
			0  & I_{m(p+q)} & -\Gamma\U^\top\\
			0   &   0  & I_{m(p+q)}
		\end{array}\right),
	\end{align*}
	where $\x^k=\mathrm{col}(\x_i^k)_{i=1}^m$, $\Y^k=\mathrm{col}(\y_i^k)_{i=1}^m$, $\Z^k=\mathrm{col}(\z_i^k)_{i=1}^m$, $\beta>0$, and
	\begin{align*}
		\Theta&=\mathrm{diag}\left(\theta_1I_{n_1},\theta_2I_{n_2},\dots,
		\theta_mI_{n_m}\right), \theta_i>0,\ \forall \ i\in\V,\\
		\Upsilon&=\mathrm{diag}\left(\alpha_1I_{n_1},\alpha_2I_{n_2},
		\dots,\alpha_mI_{n_m}\right), \alpha_i>0,\ \forall \ i\in\V, \\
		\Gamma&=\mathrm{diag}\left(\gamma_1I_{p+q},\gamma_2I_{p+q},
		\dots,\gamma_mI_{p+q}\right),\gamma_i>0,\ \forall \ i\in\V.
	\end{align*} 
    Applying the proximal point algorithm in \cite{Rockafellar76} to the generalized equation \eqref{FOOC}, we obtain the decentralized proximal method of multipliers
    (DPMM) described in terms of operators as follows: at iteration $k$, given $\xi^k$,
    $\xi^{k+1}$ is generated by
	\begin{subequations}\label{pc_PPA}
		\begin{align}
			\widehat{\xi}^k&=\left(Q+\Phi\right)^{-1}\left(Q\xi^k\right),
			\label{pc_PPA_a}\\
			\xi^{k+1}&=\xi^k-M\left(\xi^k-\widehat{\xi}^k\right),\label{pc_PPA_b}
		\end{align}
	\end{subequations}
	where $\widehat{\xi^k}=\mathrm{col}\left(\widehat{\x}^k,
	\widehat{\Y}^k,\widehat{\Z}^k\right)$.
	
	Different from the P-EXTRA algorithm \cite{PG-EXTRA}, it is a variable metric proximal point method with a symmetric positive definite metric matrix. Our method 
	\eqref{pc_PPA} employs two asymmetric invertible matrices $Q$ and $M$. $Q$ is used to obtain a prediction of the proximal point, and $M$ to correct it.
	
	Formulas \eqref{pc_PPA} are written in an operator form, which will be used in the
	convergence analysis. The following proposition presents its variables updating rules in practical computation.
	\begin{Prop}\label{PROP_4}
		Given initial points $\xi^0=\mathrm{col}\left(\x^0,\Y^0,\Z^0\right)$, let
		$\xi^k=\mathrm{col}\left(\x^k,\Y^k,\Z^k\right)$, 
		$\widehat{\xi}^k=\mathrm{col}\left(\widehat{\x}^k,\widehat{\Y}^k,
		\widehat{\Z}^k\right)$, $\Lambda^k=\U^\top\Z^k$, and $\widehat{\Lambda}^k=\U^\top\widehat{\Z}^k$ 
		 for all $k\ge0$. The iterative schemes \eqref{pc_PPA} are equivalent to
		\begin{equation}\tag{DPMM}
      	\left\{
			\begin{aligned}
				\widehat{\x}^k&=\mathop{\arg\min}\limits_{\x\in\mR^n}
				\phi^k\left(\x,\Y^k-\Gamma\Lambda^k\right),\\
				\widehat{\Y}^k&=\mP_{\K_m^\circ}\left(\Y^k-
				\Gamma\Lambda^k+\Gamma G_d\left(\widehat{\x}^k\right)\right),\\
				\x^{k+1}&=(I_n-\Theta)\x^k+\Theta \ \widehat{\x}^k,\\
				\Lambda^{k+1}&=\Lambda^k+\beta\text{\bf\L}\widehat{\Y}^k,\\
				\Y^{k+1}&=\widehat{\Y}^k+\Gamma\left(\Lambda^k-\Lambda^{k+1}\right),
			\end{aligned}
			\right.
		\end{equation}
	where $\phi^k(\x,\Y)$ is the separable augmented Lagrangian with a quadratic proximal term, i.e. 
	\begin{subequations}\label{fun_p_ALM}
		\begin{align}
			\phi^k(\x,\Y)&\triangleq\sum_{i=1}^m\phi_i^k(\x_i,\y_i),\\
			\phi_i^k(\x_i,\y_i)&\triangleq f_i(\x_i)+\frac{1}{2\gamma_i}\bigg(
			\left\Vert\mP_{\K^\circ}\left(\y_i+\gamma_i G_i(\x_i)\right)\right\Vert^2-
			\Vert\y_i\Vert^2\bigg)\notag\\
			&\hspace{3.7em}+\frac{1}{2\alpha_i}\left\Vert\x_i-\x_i^k\right\Vert^2
			+\delta_{\Omega_i}(\x_i),
		\end{align}
	\end{subequations}
    $G_d(\x)=\mathrm{col}\big(G_1(\x_1),G_2(\x_2),\dots,G_m(\x_m)\big)$,
    $\mP_{\K_m^\circ}=\underbrace{\mP_{\K^\circ}\times
    \mP_{\K^\circ}\times\cdots\times\mP_{\K^\circ}}_{m}$, and the projection is given by
    $\mP_{\K^\circ}(x_1,x_2)=(x_1,\max(x_2,0))$, for any $(x_1,x_2)\in\mR^p\times\mR^q$.
	\end{Prop}
	\begin{proof}
		Expanding \eqref{pc_PPA_a} yields
		\begin{align*}
			\Upsilon^{-1}\left(\x^k-\widehat{\x}^k\right)
			&\in\partial_\x\mL(\widehat{\x}^k,\widehat{\Y}^k),\\
			\Gamma^{-1}\left(\left(\Y^k-\Gamma\U^\top\Z^k\right)-\widehat{\Y}^k\right)
			&\in-\partial_\Y\mL(\widehat{\x}^k,\widehat{\Y}^k),\\
			\widehat{\Z}^k &=\Z^k+\beta\U\widehat{\Y}^k.
		\end{align*}
	Note that $\text{\bf\L}=\U^\top\U$, multiplying $\U^\top$ by 
	$\widehat{\Z}^k =\Z^k+\beta\U\widehat{\Y}^k$, it follows that
		\begin{subequations}\label{ppa_L}
			\begin{align}
				\Upsilon^{-1}\left(\x^k-\widehat{\x}^k\right)
				&\in\partial_\x\mL(\widehat{\x}^k,\widehat{\Y}^k),\label{ppa_L1}\\
				\Gamma^{-1}\left(\left(\Y^k-\Gamma\Lambda^k\right)-\widehat{\Y}^k\right)
				&\in-\partial_\Y\mL(\widehat{\x}^k,\widehat{\Y}^k),\label{ppa_L2}\\
				\widehat{\Lambda}^k &=\Lambda^k+\beta\text{\bf\L}\widehat{\Y}^k.\label{ppa_L3}
			\end{align}
		\end{subequations}
		It is clear that \eqref{ppa_L1}-\eqref{ppa_L2} are the proximal point method applied to the subdifferential operator $\partial\mL$, i.e.,
		\begin{align*}
			\left[
			\begin{array}{c}
				\widehat{\x}^k \\ \widehat{\Y}^k
			\end{array}\right]
			=
			\bigg(S+\partial\mL\bigg)^{-1}\left(
			S\left[
			\begin{array}{c}
				\x^k \\ \Y^k-\Gamma\Lambda^k		
			\end{array}\right]\right),
		\end{align*}
		where $S=\mathrm{diag}(\Upsilon^{-1},\Gamma^{-1})$. 
		Recall that $\ell_i(\x_i,\y_i)=f_i(\x_i)+\y_i^\top G_i(\x_i)+\delta_{\Omega_i}(\x_i)-\delta_{\K^\circ}(\y_i)$, it 
		can be viewed as the Lagrange function associated with the following local
		constrained optimization problem:
		\begin{align*}
			\min_{\x_i\in\mR^{n_i}} f_i(\x_i) \quad \mathrm{s.t.} \
		     \ G_i(\x_i)\in\K,\ \x_i\in\Omega_i.
		\end{align*}
		Moreover, it's clear that $\phi_i^k(\x_i,\y_i)$ is the augmented Lagrangian with a quadratic proximal term. $\phi_i^k(\x_i,\y_i)$ is often employed in the proximal
	    method of multipliers. A fact is that the proximal method of multipliers is equivalent to the proximal point algorithm applied to the subdifferential operator of the Lagrange function (see the minimax application of the proximal point method in \cite{Rockafellar76}). Consequently, by the separability of the functions 
	    $\mL(\x,\Y)$ and $\phi^k(\x,\Y)$, it holds that
		\begin{equation}\label{p_ALM}
			\begin{aligned}
				\left.
				\begin{array}{c}
					\widehat{\x}^k=\mathop{\arg\min}\limits_{\x\in\mR^n}
					\phi^k\left(\x,\Y^k-\Gamma\Lambda^k\right)\\
					\widehat{\Y}^k=\mP_{\K_m^\circ}\left(\Y^k-
					\Gamma\Lambda^k+\Gamma G_d\left(\widehat{\x}^k\right)\right)
				\end{array}
				\right\}
				\Longleftrightarrow \mathrm{\eqref{ppa_L1}}\ \text{and} \ \mathrm{\eqref{ppa_L2}}.
			\end{aligned}
		\end{equation}
	Using the definitions of $\Lambda^k$ and $\widehat{\Lambda}^k$,
	the correction step \eqref{pc_PPA_b} can be expanded as 
	\begin{subequations}\label{itera_c}
		\begin{align}
			\x^{k+1}&=(I-\Theta)\x^k+\Theta\ \widehat{\x}^k,\\
			\Y^{k+1} &=\widehat{\Y}^k+\Gamma\left(\Lambda^k-\widehat{\Lambda}^k\right),\\
			\Lambda^{k+1}&=\widehat{\Lambda}^k,
		\end{align}
	\end{subequations}
    which completes the proof.
	\end{proof}

    Denote $\Lambda^k=\mathrm{col}\left(\lambda_1^k,\lambda_2^k,\dots,\lambda_m^k\right)$,
    for simplicity, initialize with $\lambda_i^0=0$, $\forall\ i\in\V$, i.e., 
    $\Lambda^0=0$. Algorithm {\bf\ref{alg_1}} is derived by expanding the DPMM algorithm in coordinate blocks, and it shows how agents cooperate together to minimize the problem 
    \eqref{P} through communication with their neighbors and local computations.
	\IncMargin{1em}
	\begin{algorithm}[htbp]
		\caption{Inexact decentralized proximal method of multipliers (inexact-DPMM)}
		\label{alg_1}
		\SetKwInOut{Input}{\textbf{Parameters}}
		\SetKwInOut{Output}{\textbf{Initialization}}
		\Input{
			Each agent $i$ chooses its parameters 
			$\{\theta_i, \alpha_i, \gamma_i\}>0$, $\forall\ i\in\V$. 
			All agents agree on a common parameter $\beta>0$, and 
			construct matrix {\rm\L}\textup{\ (see Remark {\bf\ref{rem_1}})}.
		}
		\Output{
			Each agent $i$ chooses arbitrary $\x_i^0\in\Omega_i$, 
			$\y_i^0\in\K^\circ, \lambda_i^0=0$. set $k=0$.
		}
		\BlankLine
		\While{do not satisfy some stopping criterion}{
			\For{$i=1,2,\dots,m$\textup{\ (in parallel)}}{
				Agent $i$ select a precision $\varepsilon_i^k\ge0$, and compute 
				$\widehat{\x}_i^k\approx\mathop{\arg\min}\limits_{\x_i\in\mR^{n_i}}\phi_i^k
				(\x_i,\y_i^k-\gamma_i\lambda_i^k)$ using the following criterion:
				\begin{equation}\label{subproblem}
					\exists\ v_i^k\in \partial_{\x_i}\phi_i^k
					\left(\widehat{\x}_i^k,\y_i^k-\gamma_i\lambda_i^k\right) \ \text{such that}\
					\left\Vert v_i^k\right\Vert\le\varepsilon_i^k.
				\end{equation}\\
				\vspace{0.5em}
				Agent $i$ updates $\widehat{\y}_i^k=\mP_{\K^\circ}
				\left(\y_i^k-\gamma_i\lambda_i^k+
				\gamma_iG_i\left(\widehat{\x}_i^k\right)\right)$.\\
				\vspace{0.5em}
				Agent $i$ sends $\widehat{\y}_i^k$ to each neighnor $l\in\N_i$, and
				receives $\widehat{\y}_j^k$ from each neighbor $j\in\N_i$. \\
				\vspace{0.5em}
				Agent $i$ updates $\x_i^{k+1}=(1-\theta_i)\x_i^k+
				\theta_i\ \widehat{\x}_i^k$.\\
				\vspace{0.5em}
				Agent $i$ updates $\lambda_i^{k+1}=\lambda_i^k+\beta
				\sum_{j\in\N_i\cup\{i\}}{\text{\rm\L}}_{ij}\widehat{\y}_j^k$.\\
				\vspace{0.5em}
				Agent $i$ updates $\y_i^{k+1}=\widehat{\y}_i^k+\gamma_i
				\left(\lambda_i^k-\lambda_i^{k+1}\right)$.\\
			}
			$k\leftarrow k+1$, go to step 2.
		}
	\end{algorithm}
	
	Inexact-DPMM is more practical because it allows agents to use an inexact criterion \eqref{subproblem} to solve subproblems, which is computationally implementable for a wide variety of problems. Some distributed constrained optimization algorithms, such as dual subgradient algorithms \cite{Falsone17, Liang21},
	necessitate exact solutions of subproblems to obtain dual subgradients. However, if the subproblems cannot be solved exactly, only $\varepsilon$-subgradients are available, whether these algorithms still guarantee convergence is a significant question.
	
	In addition, DPMM has less communication burden. As shown in Algorithm {\bf\ref{alg_1}}, each agent only needs to communicate the value of
	$\widehat{\y}_i^k$ with neighbors once per iteration. In contrast, some alternative algorithms require multiple rounds of communication per iteration or exchange more than one variable value (cf. Table {\bf\ref{table_1}}).
	
	\section{Convergence Analysis}\label{sec_4}
	
	In this section, we analyze the proposed DPMM algorithm and state its convergence properties. According to the convergence conditions of the prediction-correction framework presented by \cite{He22}, the matrices $Q$ and $M$ are required to satisfy
	\begin{equation}
		\mH\triangleq QM^{-1}\succ0,\quad \D\triangleq\left(Q^\top+Q-M^\top\mH M\right)\succ0.
	\end{equation}
	\begin{Prop}\label{PROP_1}
		If the parameters $\theta_i\in(0,2)$, $\alpha_i>0$, $\gamma_i>0$, and
		$0<\gamma_i\beta<\frac{1}{\lambda_{\max}(\text{\rm\L})}, 
		\forall \ i=1,2,\dots,m$, the matrices $\mH$ and $\D$ satisfy that 
		$\mH\succcurlyeq \D\succ 0$.
	\end{Prop}
	\begin{proof} 
		By some calculations, one has
		\begin{align*}
			\mH=\left(
			\begin{array}{ccc}
				\Upsilon^{-1} & 0 & 0\\
				0   & \Gamma^{-1} & -\U^\top\\
				0   &  0   & \frac{1}{\beta}I_{m(p+q)}
			\end{array}\right)
			\left(
			\begin{array}{ccc}
				\Theta^{-1} & 0 & 0\\
				0   & I_{m(p+q)} & \Gamma\U^\top\\
				0   &   0  & I_{m(p+q)}
			\end{array}
			\right)
			=\left(
			\begin{array}{ccc}
				\Upsilon^{-1}\Theta^{-1} & 0 & 0\\
				0 & \Gamma^{-1} & 0 \\
				0 & 0     & \frac{1}{\beta}I_{m(p+q)}
			\end{array}\right),
		\end{align*} 
		\begin{align*}
			\D=Q^\top+Q-M^\top\mH M=Q^\top+Q-M^\top Q=\left(
			\begin{array}{ccc}
				(2I_n-\Theta)\Upsilon^{-1} & 0 & 0\\
				0 & \Gamma^{-1} & 0\\
				0 & 0 & \frac{1}{\beta}I_{m(p+q)}-\U\Gamma\U^\top \\ 
			\end{array}\right),
		\end{align*}
		\begin{align*}
			\mH-\D=\left(
			\begin{array}{ccc}
				(\Theta^{-1}+\Theta-2I_n)\Upsilon^{-1} & 0 & 0\\
				0 & 0 & 0\\
				0 & 0 & \U\Gamma\U^\top
			\end{array}
			\right).
		\end{align*}
	    In view of the basic inequality: $\frac{1}{\theta_i}+\theta_i\ge 2$ for any 
	    $\theta_i>0$, it holds that
	    $\left(\Theta^{-1}+\Theta-2I_n\right)\succcurlyeq0$.
	    if $\theta_i\in(0,2),\ \alpha_i>0,\ \gamma_i>0, \ \forall\ 
	    i=1,2,\dots,m$, the matrices $(2I_n-\Theta), \Upsilon^{-1}, \Theta^{-1}$, and
	    $\Gamma^{-1}$ are all symmetric positive definite, which implies that 
	    $\mH\succcurlyeq\D$ and $\mH\succ0$. Clearly,
		$\D\succ0$ if and only if $\frac{1}{\beta} I_{m(p+q)}-\U\Gamma\U^\top\succ0$.
		Using the Schur complement lemma \cite{Schur05} and $\text{\bf\L}=\U^\top\U$, 
		we have
		\begin{equation*}
			\D\succ0\Longleftrightarrow\frac{1}{\beta} I_{m(p+q)}-\U\Gamma\U^\top\succ0\Longleftrightarrow
			\Gamma^{-1}-\beta\U^\top\U=\Gamma^{-1}-\beta\text{\bf\L}\succ0.
		\end{equation*}
		The positive definiteness of matrix $\Gamma^{-1}-\beta\textbf{\L}$ implies
		that $0<\gamma_i\beta<\frac{1}{\lambda_{\max}(\text{\rm\L})}$, $\forall\ i\in\V$.
	\end{proof}
	\begin{rem}\label{rem_2}
		Note that the bound $0<\gamma_i\beta<\frac{1}{\lambda_{\max}(\text{\rm\L})}$
		does not necessarily imply that the selections of $\beta$ and $\gamma_i$  require any knowledge of the structure of the graph $\G$. For example, such a requirement can be avoided by using 
		$\text{\rm\L}=(I-W)/\nu$ \textit{(cf. Remark {\bf\ref{rem_1}})}. In this case,
		$0<\lambda_{\max}(\text{\rm\L})<2/\nu$, one may use $0<\gamma_i\beta\le\nu/2$ which is sufficient for $0<\gamma_i\beta<\frac{1}{\lambda_{\max}(\text{\rm\L})}$.
	\end{rem}
	The iterative schemes \eqref{pc_PPA} are essential for the convergence analysis. If agents use criterion \eqref{subproblem} to minimize 
	$\phi^k(\x,\Y^k-\Gamma\Lambda^k)$, formulas \eqref{pc_PPA_a} becomes an inexact proximal point method. The following lemma is a key insight for the inexact-DPMM 
	algorithm.
	\begin{lem}\label{LEM_2}
		Under Assumptions \textup{\bf\ref{ass_1}, \ref{ass_2}}, and 
		\textup{\bf\ref{ass_3}},
		let $\left\{(\x^k,\Y^k,\Lambda^k)\right\}_{k\ge0}$ and
		$\left\{(\widehat{\x}^k,\widehat{\Y}^k,\widehat{\Lambda}^k)\right\}_{k\ge0}$
		be any sequences generated by Algorithm {\rm\bf\ref{alg_1}}, where
		$\widehat{\Lambda}^k=\Lambda^{k+1}, \forall \ k\ge0$. Denote
		$\xi^k=\mathrm{col}\left(\x^k,\Y^k,\Z^k\right)$ and
		$\widehat{\xi}^k=\mathrm{col}\left(\widehat{\x}^k,\widehat{\Y}^k,
		\widehat{\Z}^k\right)$, where the sequences $\left\{Z^k\right\}_{k\ge0}$ and 
		$\left\{\widehat{Z}^k\right\}_{k\ge0}$ satisfy 
		$\Lambda^k=\U^\top\Z^k$ and $\widehat{\Lambda}^k=\U^\top\widehat{\Z}^k$,
		respectively. It holds that
		\begin{equation}\label{error_PPA}
			\begin{aligned}
				\widehat{\xi}^k&=\left(Q+\Phi\right)^{-1}\left(Q\xi^k+V^k\right),\\	
				\xi^{k+1}& = \xi^k-M\left(\xi^k-\widehat{\xi}^k\right),
			\end{aligned}
		\end{equation}
	\end{lem}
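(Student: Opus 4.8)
The plan is to treat this statement as the inexact counterpart of Proposition \textbf{\ref{PROP_4}}: Algorithm \textbf{\ref{alg_1}} differs from the exact DPMM scheme only in its first step, where the $\x$-subproblem $\min_{\x}\phi^k(\x,\Y^k-\Gamma\Lambda^k)$ is now solved only up to the tolerance \eqref{subproblem}, whereas the projection producing $\widehat{\Y}^k$, the communication step producing $\widehat{\Lambda}^k$, and the correction producing $\xi^{k+1}$ are all carried out exactly. I would therefore reproduce the derivation in the proof of Proposition \textbf{\ref{PROP_4}} line by line and track precisely where the residual $v^k=\mathrm{col}(v_1^k,\dots,v_m^k)$ from \eqref{subproblem} enters. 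The expected outcome is that the perturbation is confined to the first coordinate block, i.e. $V^k=\mathrm{col}(v^k,0,0)$, so that $\Vert V^k\Vert^2=\sum_{i=1}^m\Vert v_i^k\Vert^2\le\sum_{i=1}^m(\varepsilon_i^k)^2$; this explicit form is what the later convergence estimates will need.

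For the first block I would start from the inexact criterion \eqref{subproblem}, which supplies $v_i^k\in\partial_{\x_i}\phi_i^k(\widehat{\x}_i^k,\y_i^k-\gamma_i\lambda_i^k)$. Invoking the same subdifferential identity already established in the proof of Proposition \textbf{\ref{PROP_4}} (the $\x_i$-subdifferential of the augmented penalty is carried by the projected multiplier $\widehat{\y}_i^k=\mP_{\K^\circ}(\y_i^k-\gamma_i\lambda_i^k+\gamma_iG_i(\widehat{\x}_i^k))$), one has
\begin{equation*}
\partial_{\x_i}\phi_i^k(\widehat{\x}_i^k,\y_i^k-\gamma_i\lambda_i^k)=\partial_{\x_i}\ell_i(\widehat{\x}_i^k,\widehat{\y}_i^k)+\tfrac{1}{\alpha_i}(\widehat{\x}_i^k-\x_i^k).
\end{equation*}
Substituting this into \eqref{subproblem} and rearranging gives $\tfrac{1}{\alpha_i}(\x_i^k-\widehat{\x}_i^k)+v_i^k\in\partial_{\x_i}\ell_i(\widehat{\x}_i^k,\widehat{\y}_i^k)$, and stacking over $i\in\V$ yields the inexact analogue of \eqref{ppa_L1},
\begin{equation*}
\Upsilon^{-1}(\x^k-\widehat{\x}^k)+v^k\in\partial_\x\mL(\widehat{\x}^k,\widehat{\Y}^k).
\end{equation*}

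For the remaining two blocks nothing changes. The projection step reproduces \eqref{ppa_L2} verbatim, and the relation $\widehat{\Z}^k=\Z^k+\beta\U\widehat{\Y}^k$ (equivalently $\widehat{\Lambda}^k=\Lambda^k+\beta\textbf{\L}\widehat{\Y}^k$ after multiplying by $\U^\top$ and using $\textbf{\L}=\U^\top\U$) reproduces \eqref{ppa_L3}. To justify the auxiliary $\Z$-variables I would note that they are defined recursively from $\Z^0=0$ by $\widehat{\Z}^k=\Z^k+\beta\U\widehat{\Y}^k$ and $\Z^{k+1}=\widehat{\Z}^k$; since $\Lambda^0=0\in\mathrm{range}(\U^\top)$ and $\Lambda^{k+1}=\U^\top(\Z^k+\beta\U\widehat{\Y}^k)$, an easy induction shows $\U^\top\Z^k=\Lambda^k$ and $\U^\top\widehat{\Z}^k=\widehat{\Lambda}^k$, as required. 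Collecting the three blocks and setting $V^k=\mathrm{col}(v^k,0,0)$, the inclusion $Q(\xi^k-\widehat{\xi}^k)+V^k\in\Phi(\widehat{\xi}^k)$ holds, which is exactly the first line of \eqref{error_PPA}. The second line is identical to the exact case: because the correction step of Algorithm \textbf{\ref{alg_1}} is untouched by the inexactness, expanding $\xi^{k+1}=\xi^k-M(\xi^k-\widehat{\xi}^k)$ as in \eqref{itera_c} recovers the algorithm's updates of $\x^{k+1}$, $\lambda_i^{k+1}$, and $\y_i^{k+1}$ directly.

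The only genuinely delicate step is the subdifferential identity for $\partial_{\x_i}\phi_i^k$ when $G_i$ is non-smooth (since $g_i$ may be non-smooth): one must apply the chain rule to the convex composition $\x_i\mapsto\tfrac{1}{2\gamma_i}\Vert\mP_{\K^\circ}(\y_i+\gamma_iG_i(\x_i))\Vert^2$, using that $u\mapsto\tfrac12\Vert\mP_{\K^\circ}(u)\Vert^2$ is differentiable with gradient $\mP_{\K^\circ}(u)$ (Moreau's decomposition for the polar pair $\K,\K^\circ$) together with the fact that $\widehat{\y}_i^k\in\K^\circ$ keeps the relevant components of the multiplier sign-consistent. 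This identity is, however, precisely the one already proved for Proposition \textbf{\ref{PROP_4}}, so I would import it rather than rederive it; the only new content of the lemma is the clean localization of the perturbation into the first coordinate block, namely $V^k=\mathrm{col}(v^k,0,0)$ with $\Vert V^k\Vert\le(\sum_{i=1}^m(\varepsilon_i^k)^2)^{1/2}$.
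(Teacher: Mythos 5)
Your proposal is correct and follows essentially the same route as the paper: both track the residual $v_i^k$ from criterion \eqref{subproblem} through the derivation of Proposition \textbf{\ref{PROP_4}}, use the same subdifferential identity $\partial_{\x_i}\phi_i^k(\widehat{\x}_i^k,\y_i^k-\gamma_i\lambda_i^k)=\partial_{\x_i}\ell_i(\widehat{\x}_i^k,\widehat{\y}_i^k)+\tfrac{1}{\alpha_i}(\widehat{\x}_i^k-\x_i^k)$ to confine the perturbation to the first block, leave the $\Y$- and $\Z$-blocks untouched, and conclude $V^k=\mathrm{col}(v^k,0,0)$ with $\Vert V^k\Vert\le(\sum_{i=1}^m(\varepsilon_i^k)^2)^{1/2}$. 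Your inductive construction of the sequences $\{\Z^k\}$, $\{\widehat{\Z}^k\}$ from $\Lambda^0=0$ is a small addition the paper leaves implicit, but it does not change the argument.
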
 
	\noindent where $V^k=\mathrm{col}\left(\left(v_i^k\right)_{i=1}^m,0_{m(p+q)},0_{m(p+q)}\right)$ and $\left\Vert V^k\right\Vert\le\varepsilon^k\triangleq
	\left(\sum_{i=1}^m(\varepsilon_i^k)^2\right)^{1/2}$.
	\begin{proof}
			The proof for this lemma is basically the same as
		\cite[Proposition 8]{Rockafellar76}. Using the calculation rule of subdifferentials for $\phi^k(\x,\Y)$ and $\mL(\x,\Y)$, we obtain 
		\begin{equation*}
		  v_i^k\in \partial_{\x_i}\phi_i^k\left(\widehat{\x}_i^k,
		         \y_i^k-\gamma_i^k\lambda_i^k\right)
			    =\ \partial_{\x_i}\ell_i\left(\widehat{\x}_i^k,\widehat{\y}_i^k\right)
			    +\frac{1}{\alpha_i}\left(\widehat{\x}_i^k-\x_i^k\right).
		\end{equation*}
	  On the other hand, the maximum 
	  \begin{align*}
	  	\max_{\y_i\in{\mR^{p+q}}}\left\{\ell_i(\widehat{\x}_i^k,\y_i)-
	  	   \frac{1}{2\gamma_i}\Vert\y_i-(\y_i^k-\gamma_i\lambda_i^k)\Vert^2\right\}
	  \end{align*}
       is found at $\widehat{\y}_i^k$, hence it holds that
       \begin{equation*}
       	0\in\partial_{\y_i}\left(\ell_i(\widehat{\x}_i^k,\y_i)-
       	\frac{1}{2\gamma_i}\Vert\y_i-(\y_i^k-\gamma_i\lambda_i^k)\Vert^2\right)\bigg|
       	  _{\y_i=\widehat{\y}_i^k}=\partial_{\y_i}\ell_i\left(\widehat{\x}_i^k,
       	  \widehat{\y}_i^k\right)-\frac{1}{\gamma_i}\left(\widehat{\y}_i^k-
       	  (\y_i^k-\gamma_i^k\lambda_i^k)\right).
       \end{equation*}
      Consequently, one has
		\begin{align*}
			\Upsilon^{-1}\left(\x^k-\widehat{\x}^k\right)+v^k
			&\in\partial_\x\mL(\widehat{\x}^k,\widehat{\Y}^k),\\
			\Gamma^{-1}\left(\left(\Y^k-\Gamma\Lambda^k\right)-\widehat{\Y}^k\right)
			&\in-\partial_\Y\mL(\widehat{\x}^k,\widehat{\Y}^k),\\
		\end{align*}
	    where $v^k=\mathrm{col}(v_i^k)_{i=1}^m$.
		Combining them with $\widehat{\Z}^k=\Z^k+\beta\U\widehat{\Y}^k$, noticing
		that $\Lambda^k=\U^\top\Z^k$ and $\widehat{\Lambda}^k=\U^\top\widehat{\Z}^k$,
		it holds that
		\begin{equation*}
			\widehat{\xi}^k=(Q+\Phi)^{-1}\left(Q\xi^k+V^k\right),
		\end{equation*}
	  $\xi^{k+1}$ is generated by $\xi^k-M\left(\xi^k-\widehat{\xi}^k\right)$ all the time.
	  The bound $\Vert v_i^k\Vert\le\varepsilon_i^k, \forall\ i\in\V$,
	  indicates that
		\begin{equation*}
			\Vert V^k\Vert=\left(\sum_{i=1}^m\Vert v_i^k\Vert^2\right)^{1/2}
			\le\left(\sum_{i=1}^m(\varepsilon_i^k)^2\right)^{1/2}\triangleq\varepsilon^k.
			\qedhere
		\end{equation*}
	\end{proof}
	\subsection{Sublinear Rate Under General Convexity}
	Lemma {\bf\ref{LEM_2}} reveals that the inexact-DPMM algorithm is essentially a prediction-correction proximal point method with noises $V^k$. Using monotone operator theory, the convergence analysis is templated.
	\begin{thm}\label{THM_1}
		Suppose Assumptions \textup{\bf\ref{ass_1}, \ref{ass_2}} and 
		\textup{\bf\ref{ass_3}} hold.
		If parameters $\theta_i\in(0,2)$, $\alpha_i>0$, $\gamma_i>0$, 
		$0<\gamma_i\beta<\frac{1}{\lambda_{\max}(\text{\rm\L})}$, and 
		$\sum_{k=0}^\infty\varepsilon_i^k<\infty,\forall \ i=1,2,\dots,m$, the sequences
		$\left\{\xi^k\right\}_{k\ge0}$ and $\left\{\widehat{\xi}^k\right\}_{k\ge0}$
		generated by Algorithm {\rm\bf\ref{alg_1}} satisfy
		\begin{enumerate}[{\bf(a)}]
			\item for any $\xi^*=\mathrm{col}\left(\x^*,\Y^*,\Z^*\right)\in\Phi^{-1}(0)$,
			it holds
			\begin{equation}\label{ineq_conv}
				\left\Vert\xi^{k+1}-\xi^*\right\Vert^2_\mH\le
				\left\Vert\xi^k-\xi^*\right\Vert^2_\mH-\left\Vert
				\xi^k-\widehat{\xi}^k\right\Vert^2_\D
				+2\left\langle\widehat{\xi}^k-\xi^*,V^k\right\rangle,
			\end{equation}
			where $V^k$ is defined by Lemma {\bf\ref{LEM_2}}.
			\item the sequence $\left\{\xi^k\right\}_{k\ge0}$ is bounded.
			\item the sequence $\left\{\xi^k\right\}_{k\ge0}$ converges to a zero-point $\xi^\infty=\mathrm{col}\left(\x^\infty,\Y^\infty,\Z^\infty\right)
			\in\Phi^{-1}(0)$, i.e., $\x^\infty$ is an optimal solution to \eqref{P},
			$\Y^\infty=\1\otimes\y^\infty$, and $\y^\infty$ is an optimal solution to \eqref{D}.
		\end{enumerate}
	\end{thm}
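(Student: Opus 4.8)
The plan is to treat the inexact-DPMM iteration as the noisy prediction-correction proximal point recursion established in Lemma \ref{LEM_2}, and to push through the standard monotone-operator energy argument in the $\mH$-norm. The three parts are naturally sequential: part \textbf{(a)} furnishes a quasi-Fej\'er inequality, part \textbf{(b)} extracts boundedness (and summability of the correction gaps) from it, and part \textbf{(c)} upgrades this to full convergence.

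For part \textbf{(a)}, I would start from the characterization $\widehat{\xi}^k=(Q+\Phi)^{-1}(Q\xi^k+V^k)$, which means $Q\xi^k+V^k-Q\widehat{\xi}^k\in\Phi(\widehat{\xi}^k)$. Since $\mathbf 0\in\Phi(\xi^*)$ and $\Phi$ is maximal monotone, monotonicity gives $\langle \widehat{\xi}^k-\xi^*,\ Q(\xi^k-\widehat{\xi}^k)+V^k\rangle\ge0$. The correction step $\xi^{k+1}=\xi^k-M(\xi^k-\widehat{\xi}^k)$ together with $\mH=QM^{-1}$ lets me rewrite $Q(\xi^k-\widehat{\xi}^k)=\mH(\xi^k-\xi^{k+1})$. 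Substituting and expanding $\Vert\xi^{k+1}-\xi^*\Vert_\mH^2$ via the correction identity, the cross terms combine into $-\Vert\xi^k-\widehat{\xi}^k\Vert_{(Q^\top+Q-M^\top\mH M)}^2=-\Vert\xi^k-\widehat{\xi}^k\Vert_\D^2$, while the noise contributes exactly $2\langle\widehat{\xi}^k-\xi^*,V^k\rangle$; this yields \eqref{ineq_conv}. This is the routine but central algebraic identity, and it relies on $\mH\succ0$ and $\D\succ0$ from Proposition \ref{PROP_1}.

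For part \textbf{(b)}, I would control the noise term by Cauchy--Schwarz: $2\langle\widehat{\xi}^k-\xi^*,V^k\rangle\le 2\Vert\widehat{\xi}^k-\xi^*\Vert_\mH\,\Vert V^k\Vert_{\mH^{-1}}\le c\,\Vert\widehat{\xi}^k-\xi^*\Vert_\mH\,\varepsilon^k$ for a constant $c$ depending on $\mH$. The delicate point is that $\widehat{\xi}^k$, not $\xi^k$, appears; I would either bound $\Vert\widehat{\xi}^k-\xi^*\Vert_\mH$ in terms of $\Vert\xi^k-\xi^*\Vert_\mH$ plus $\Vert\xi^k-\widehat{\xi}^k\Vert_\mH$, or invoke the standard quasi-Fej\'er machinery directly. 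Dropping the nonpositive $-\Vert\xi^k-\widehat{\xi}^k\Vert_\D^2$ term and summing, the hypothesis $\sum_k\varepsilon_i^k<\infty$ (hence $\sum_k\varepsilon^k<\infty$) lets me apply a discrete Gronwall/quasi-Fej\'er lemma to conclude that $\{\Vert\xi^k-\xi^*\Vert_\mH\}$ converges, so $\{\xi^k\}$ is bounded, and moreover that $\sum_k\Vert\xi^k-\widehat{\xi}^k\Vert_\D^2<\infty$, whence $\xi^k-\widehat{\xi}^k\to0$.

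For part \textbf{(c)}, I would argue by a subsequence/accumulation-point argument. Boundedness gives a convergent subsequence $\xi^{k_j}\to\xi^\infty$; since $\xi^{k_j}-\widehat{\xi}^{k_j}\to0$, also $\widehat{\xi}^{k_j}\to\xi^\infty$. Passing to the limit in $Q\xi^{k_j}+V^{k_j}-Q\widehat{\xi}^{k_j}\in\Phi(\widehat{\xi}^{k_j})$, using $V^k\to0$ and the demiclosedness of the maximal monotone graph of $\Phi$, I get $\mathbf 0\in\Phi(\xi^\infty)$, i.e. $\xi^\infty\in\Phi^{-1}(0)$. Applying the Fej\'er convergence established in part \textbf{(b)} \emph{with this particular} $\xi^*=\xi^\infty$ shows $\Vert\xi^k-\xi^\infty\Vert_\mH$ converges, and since a subsequence tends to $0$ the whole sequence converges to $\xi^\infty$. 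Finally Lemma \ref{LEM_1} identifies the structure of the limit: $\U\Y^\infty=0$ forces $\Y^\infty=\1\otimes\y^\infty$, and $\x^\infty,\y^\infty$ are optimal for \eqref{P} and \eqref{D} respectively. The main obstacle I anticipate is the careful bookkeeping in part \textbf{(b)}, namely handling the noise term evaluated at $\widehat{\xi}^k$ and choosing the right summability lemma so that the perturbed inequality still yields a genuine Fej\'er-type property rather than merely a bound.
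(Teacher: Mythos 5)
Your proposal is correct and follows essentially the same route as the paper's proof: the monotonicity-plus-correction-identity algebra for part \textbf{(a)}, the quasi-Fej\'er argument that absorbs the noise term (evaluated at $\widehat{\xi}^k$) via exactly the splitting you describe for part \textbf{(b)}, and the cluster-point/demiclosedness argument with the Fej\'er property re-applied at $\xi^*=\xi^\infty$ for part \textbf{(c)}, concluding with Lemma \ref{LEM_1}. The only minor difference is that you make the summability of $\sum_k\Vert\xi^k-\widehat{\xi}^k\Vert_\D^2$ (hence $\xi^k-\widehat{\xi}^k\to0$) explicit already in part \textbf{(b)}, whereas the paper uses this fact implicitly in part \textbf{(c)} and only writes it out later in the proof of Theorem \ref{THM_2}.
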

	\begin{proof}
		\hspace{1em}
		\begin{enumerate}[{\bf(a)}]
			\item From \eqref{error_PPA} in Lemma {\bf\ref{LEM_2}}, we have 
			$Q\left(\xi^k-\widehat{\xi}^k\right)+V^k
			\in\Phi\left(\widehat{\xi}^k\right)$ and
			$Q\left(\xi^k-\widehat{\xi}^k\right)
			=\mH\left(\xi^k-\xi^{k+1}\right)$. For any $\xi^*\in\Phi^{-1}(0)$,
			Using the monotonicity of the operator $\Phi$, it holds that
			\begin{equation}\label{monot}
				\left\langle\mH\left(\xi^k-\xi^{k+1}\right)+V^k,
				\widehat{\xi}^k-\xi^*\right\rangle\ge0.
			\end{equation}
			Applying the identity $2\langle a-b,H(c-d)\rangle=\Vert a-d\Vert^2_\mH-\Vert a-c\Vert^2_\mH+\Vert b-c\Vert^2_\mH-\Vert b-d\Vert^2_\mH$ to 
			$\left\langle\mH\left(\xi^k-\xi^{k+1}\right),\widehat{\xi}^k-\xi^*\right\rangle$,
			we have 
			\begin{equation}\label{identy}
				\begin{aligned}
					\left\langle\mH\left(\xi^k-\xi^{k+1}\right),\widehat{\xi}^k-\xi^*\right\rangle
					&=\frac{1}{2}\left\Vert\widehat{\xi}^k-\xi^{k+1}\right\Vert^2_\mH-\frac{1}{2}
					\left\Vert\widehat{\xi}^k-\xi^k\right\Vert^2_\mH\\
					&\ +\frac{1}{2}\left\Vert\xi^*-\xi^k\right\Vert^2_\mH
					-\frac{1}{2}\left\Vert\xi^*-\xi^{k+1}\right\Vert^2_\mH.
				\end{aligned}
			\end{equation}
			Since $Q=\mH M,\ \D=Q^\top+Q-M^\top\mH M$, it follows that
			\begin{align}
				&\left\Vert\widehat{\xi}^k-\xi^k\right\Vert^2_\mH-
				\left\Vert\widehat{\xi}^k-\xi^{k+1}\right\Vert^2_\mH\notag\\
				=&\left\Vert\widehat{\xi}^k-\xi^k\right\Vert^2_\mH-
				\left\Vert\left(\xi^k-\widehat{\xi}^k\right)-
				M\left(\xi^k-\widehat{\xi}^k\right)\right\Vert^2_\mH\notag\\
				=& 2\left\langle\xi^k-\widehat{\xi}^k,\mH M
				\left(\xi^k-\widehat{\xi}^k\right)\right\rangle
				-\left\langle\xi^k-\widehat{\xi}^k,
				M^\top\mH M\left(\xi^k-\widehat{\xi}^k\right)\right\rangle\notag\\
				=&\left\langle\xi^k-\widehat{\xi}^k,
				\left(Q^\top+Q-M^\top\mH M\right)\left(\xi^k-\widehat{\xi}^k\right)\right\rangle\notag\\
				=&\left\Vert\xi^k-\widehat{\xi}^k\right\Vert^2_\D.\label{norm_D}
			\end{align}
			It is obvious that \eqref{monot}, \eqref{identy}, and \eqref{norm_D} jointly 
			imply \eqref{ineq_conv}.
			
			\item Rewrite \eqref{ineq_conv} as
			\begin{align*}
				\left\Vert\xi^{k+1}-\xi^*\right\Vert^2_\mH&\le
				\left\Vert\xi^k-\xi^*\right\Vert^2_\mH
				+2\left\langle\xi^k-\xi^*,V^k\right\rangle
				+2\left\langle\widehat{\xi}^k-\xi^k,V^k\right\rangle
				-\left\Vert\xi^k-\widehat{\xi}^k\right\Vert^2_\D\\
				&= \left\Vert\xi^k-\xi^*\right\Vert^2_\mH
				+2\left\langle\mH^{1/2}\left(\xi^k-\xi^*\right),
				\mH^{-1/2}V^k\right\rangle\\
				&\hspace{6.3em}+2\left\langle\D^{1/2}\left(\widehat{\xi}^k-\xi^k\right),
				\D^{-1/2}V^k\right\rangle-\left\Vert\xi^k-\widehat{\xi}^k\right\Vert^2_\D.
			\end{align*}
			In view of the basic inequality: 
			$2\langle a,b\rangle\le\Vert a\Vert^2+\Vert b\Vert^2$, hence
			\begin{equation*}
				2\left\langle\D^{1/2}\left(\widehat{\xi}^k-\xi^k\right),
				\D^{-1/2}V^k\right\rangle-\left\Vert\xi^k-\widehat{\xi}^k\right\Vert^2_\D
				\le\left\Vert\D^{-1/2}V^k\right\Vert^2.
			\end{equation*}
			Similarly, the inequality $\langle a,b \rangle\le\Vert a\Vert\Vert b\Vert$ entails
			that 
			\begin{equation*}
				\left\Vert\xi^k-\xi^*\right\Vert^2_\mH
				+2\left\langle\mH^{1/2}\left(\xi^k-\xi^*\right),\mH^{-1/2}V^k\right\rangle
				\le\left(\left\Vert\xi^k-\xi^*\right\Vert_\mH+
				\left\Vert\mH^{-1/2}V^k\right\Vert\right)^2.
			\end{equation*}
			Adding up these bounds yields 
			\begin{align*}
				\left\Vert\xi^{k+1}-\xi^*\right\Vert^2_\mH&\le
				\left(\left\Vert\xi^k-\xi^*\right\Vert_\mH+
				\left\Vert\mH^{-1/2}V^k\right\Vert\right)^2
				+\left\Vert\D^{-1/2}V^k\right\Vert^2\\
				&\le\bigg(\left\Vert\xi^k-\xi^*\right\Vert_\mH+
				\left\Vert\mH^{-1/2}V^k\right\Vert+\left\Vert\D^{-1/2}V^k\right\Vert\bigg)^2.
			\end{align*}
			Noticing that $\Vert V^k\Vert\le\varepsilon^k$, furthermore, it holds that
			\begin{equation}\label{xi_bounded}
				\left\Vert\xi^{k+1}-\xi^*\right\Vert_\mH\le
				\left\Vert\xi^k-\xi^*\right\Vert_\mH+\mu\varepsilon^k,
			\end{equation}
			where $\mu=\Vert\mH^{-1/2}\Vert+\Vert\D^{-1/2}\Vert$. Since the non-negative scalar sequence $\left\{\varepsilon^k\right\}_{k\ge0}$ is summable, for any positive integer
			$l$, Summing $k$ from $0$ to $l$ at both sides of \eqref{xi_bounded} yields
			\begin{equation*}
				\left\Vert\xi^{l+1}-\xi^*\right\Vert_\mH\le\left(
				\left\Vert\xi^0-\xi^*\right\Vert_\mH+\mu\sum_{k=0}^\infty\varepsilon^k\right)
				<\infty,\quad\forall \ l\ge0.
			\end{equation*}
			It implies the boundedness of the sequence $\left\{\xi^k\right\}_{k\ge0}$.
			
			\item The bound \eqref{xi_bounded} means that 
			$\left\{\Vert\xi^k-\xi^*\Vert_\mH\right\}_{k\ge0}$ 
			is a quasi-Fej\'er monotone sequence, so it is convergent.
			Since $\{\xi^k\}_{k\ge0}$ is bounded, let $\xi^\infty$ be an arbitrary cluster.
			The relation $Q\left(\xi^k-\widehat{\xi}^k\right)
			=\mH\left(\xi^k-\xi^{k+1}\right)\to 0$ indicates that $\xi^\infty$ is also a cluster
			of the sequence $\left\{\widehat{\xi}^k\right\}_{k\ge0}$. Taking the limit in the
			inclusion $Q\left(\xi^k-\widehat{\xi}^k\right)+V^k\in\Phi\left(\widehat{\xi}^k\right)$,
			we have $0\in\Phi\left(\xi^\infty\right)$. It means that $\xi^\infty$ and
			$\xi^*$ plays the same role in our deductions. In parts {\bf(a)} and {\bf(b)}, if 
			we replace $\xi^*$ with $\xi^\infty$, the results are still valid. Thus the sequence
			$\left\{\Vert\xi^k-\xi^\infty\Vert_\mH\right\}_{k\ge0}$ is convergent, and it converges only to $0$, i.e., $\xi^k\to\xi^\infty$ as $k\to\infty$.
			Suppose $\xi^\infty=\mathrm{col}\left(\x^\infty,\Y^\infty,\Z^\infty\right)$.
			From Lemma {\bf\ref{LEM_1}}, we know that there exists $\y^\infty$ such that $\Y^\infty=\1\otimes \y^\infty$, and $\x^\infty$ and $\y^\infty$ are an optimal 
			solution to \eqref{P} and \eqref{D}, respectively.
			\qedhere
		\end{enumerate}
	\end{proof} 
	The convergence results of Theorem {\bf\ref{THM_1} (c)} show that for each agent
	$i\in\{1,2,\dots,m\}$, its decision variable sequence $\left\{\x_i^k\right\}_{k\ge0}$ 
	converges to $\x_i^\infty$,
	and Lagrange multiplier estimate sequence $\left\{\y_i^k\right\}_{k\ge0}$ 
	converges to  $\y^\infty$. In other words, by communicating with their neighbors and 
	computing locally, all agents cooperatively find a primal optimal solution 
	$\x^\infty$ to \eqref{P}, and reach a consensus on the estimates of the Lagrange 
	multiplier. This consensual multiplier $\y^\infty$ is also a dual optimal solution to 
	\eqref{D}. However, some distributed optimization algorithms for \eqref{P} in the literature have either no knowledge of the convergence of its generated sequence 
	$\left\{\x^k\right\}_{k\ge0}$ or the weaker result that each cluster of 
	$\left\{\x^k\right\}_{k\ge0}$ is an optimal solution to \eqref{P} 
	(cf. Table {\bf\ref{table_1}}).
	
	The first-order optimality residual (KKT system) is one of the most important quantities to measure the optimality of the solution generated by convex optimization algorithms. The following theorem presents the convergence rate of the first-order optimality residual of the proposed Algorithm {\bf\ref{alg_1}}.
	\begin{thm}\label{THM_2}
		Under the same Assumptions and parameters as in Theorem {\bf\ref{THM_1}},
		it holds that
		\begin{equation}\label{ineq_rate}
			\begin{aligned}
				&\left\Vert M\left(\xi^{k+1}-\widehat{\xi}^{k+1}\right)\right\Vert^2_\mH+
				\left\Vert \left(\xi^k-\widehat{\xi}^k\right)
				-\left(\xi^{k+1}-\widehat{\xi}^{k+1}\right)\right\Vert^2_\D \\
				\le \ &\left\Vert M\left(\xi^k-\widehat{\xi}^k\right)\right\Vert^2_\mH
				+2\left\langle\widehat{\xi}^k-\widehat{\xi}^{k+1},V^k-V^{k+1}
				\right\rangle.
			\end{aligned}
		\end{equation}
		Furthermore, if $\varepsilon_i^k=O((\tau_i)^k)$, 
		scalars $\tau_i\in(0,1),\forall\ i=1,2,\dots,m$.
		Let $\tau_{\max}=\mathop{\max}\limits_i\tau_i\in(0,1)$, we then have
		\begin{enumerate}[{\bf(a)}]
			\item successive difference:
			\begin{equation*}
				\left\Vert\xi^{k+1}-\xi^k\right\Vert^2_\mH=o\left(\frac{1}{k}\right)
				+O\left((\tau_{\max})^k\right).
			\end{equation*}
			\item first-order optimality residual:
			\begin{equation*}
				\mathrm{dist}^2\left(0,\Phi\left(\widehat{\x}^k,\widehat{\Y}^k,
				\widehat{\Z}^k\right)\right)=o\left(\frac{1}{k}\right)
				+O\left((\tau_{\max})^k\right).
			\end{equation*}
		\end{enumerate}
	\end{thm}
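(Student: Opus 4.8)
The plan is to establish the key inequality \eqref{ineq_rate} first, and then convert it into the two rate statements by a summability argument analogous to the one used in Theorem \ref{THM_1}(b)--(c). To prove \eqref{ineq_rate}, I would reuse the machinery already developed in Lemma \ref{LEM_2} and Theorem \ref{THM_1}(a), but applied to two consecutive iterates rather than to an optimal point $\xi^*$. The crucial observation is that \eqref{error_PPA} gives inclusions $Q(\xi^k-\widehat\xi^k)+V^k\in\Phi(\widehat\xi^k)$ at every step; subtracting the inclusions at steps $k$ and $k+1$ and pairing with $\widehat\xi^k-\widehat\xi^{k+1}$, the monotonicity of $\Phi$ yields
\begin{equation*}
	\left\langle Q\left(\xi^k-\widehat\xi^k\right)-Q\left(\xi^{k+1}-\widehat\xi^{k+1}\right)+V^k-V^{k+1},\ \widehat\xi^k-\widehat\xi^{k+1}\right\rangle\ge0.
\end{equation*}
Using $Q=\mH M$ and the relation $\xi^{k+1}=\xi^k-M(\xi^k-\widehat\xi^k)$ to rewrite the $Q$-terms, together with the same polarization identity $2\langle a-b,\mH(c-d)\rangle=\Vert a-d\Vert_\mH^2-\Vert a-c\Vert_\mH^2+\Vert b-c\Vert_\mH^2-\Vert b-d\Vert_\mH^2$ and the identity $\mH-\mH M=\cdots$ linking $\mH$, $M$, and $\D$ that was exploited in \eqref{norm_D}, the monotone inequality should collapse exactly into \eqref{ineq_rate}. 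This is the computational heart of the argument.

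Once \eqref{ineq_rate} is in hand, I would treat $a_k\triangleq\Vert M(\xi^k-\widehat\xi^k)\Vert_\mH^2$ as an almost-monotone sequence perturbed by the noise cross-term $2\langle\widehat\xi^k-\widehat\xi^{k+1},V^k-V^{k+1}\rangle$. Since $\{\xi^k\}$ and $\{\widehat\xi^k\}$ are bounded by Theorem \ref{THM_1}(b)--(c), and since $\varepsilon_i^k=O((\tau_i)^k)$ forces $\Vert V^k\Vert=O((\tau_{\max})^k)$, this cross-term is absolutely summable and in fact $O((\tau_{\max})^k)$. The standard trick is then: telescoping \eqref{ineq_rate} shows $\sum_k\Vert(\xi^k-\widehat\xi^k)-(\xi^{k+1}-\widehat\xi^{k+1})\Vert_\D^2<\infty$ and that $a_k$ converges; combining $\sum_k a_k<\infty$ (which follows from summing \eqref{ineq_conv} over $k$, since the $\D$-term there is comparable to $a_k$) with the monotone-up-to-error decay of $a_k$ gives the refined rate $a_k=o(1/k)+O((\tau_{\max})^k)$. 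The passage from $\sum a_k<\infty$ plus quasi-monotonicity to the little-$o(1/k)$ bound is the classical Deng--Yin/Davis lemma on nonincreasing summable sequences, adapted to absorb the geometric noise.

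For the two displayed conclusions, part {\bf(a)} follows because $\xi^{k+1}-\xi^k=-M(\xi^k-\widehat\xi^k)$, so $\Vert\xi^{k+1}-\xi^k\Vert_\mH^2=\Vert M(\xi^k-\widehat\xi^k)\Vert_\mH^2=a_k$, which is precisely the quantity just bounded. Part {\bf(b)} follows from the inclusion $Q(\xi^k-\widehat\xi^k)+V^k\in\Phi(\widehat\xi^k)$, which gives $\dist(0,\Phi(\widehat\xi^k))\le\Vert Q(\xi^k-\widehat\xi^k)\Vert+\Vert V^k\Vert$; bounding $\Vert Q(\xi^k-\widehat\xi^k)\Vert$ by a constant multiple of $a_k^{1/2}$ (via $\Vert\cdot\Vert\le\lambda_{\min}(\mH)^{-1/2}\Vert\cdot\Vert_\mH$ and $Q=\mH M$) and squaring transfers the $o(1/k)+O((\tau_{\max})^k)$ rate from $a_k$ to the squared residual, with the geometric noise term $\Vert V^k\Vert^2=O((\tau_{\max})^{2k})$ safely absorbed. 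I expect the main obstacle to be the derivation of \eqref{ineq_rate} itself: the cancellations between the $\mH$-, $M$-, and $\D$-weighted terms are delicate, and care is needed because $M$ is \emph{not} symmetric, so one cannot freely move $M$ across the inner product; keeping track of $M^\top\mH M$ versus $Q^\top+Q$ is exactly where the identity defining $\D$ must be invoked at the right moment.
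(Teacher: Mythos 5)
Your proposal follows essentially the same route as the paper's own proof: monotonicity of $\Phi$ applied to the inclusions at two consecutive iterates, the identities $Q=\mH M$ and $\D=Q^\top+Q-M^\top\mH M$ to collapse the cross terms into \eqref{ineq_rate}, and then summability of $\Vert M(\xi^k-\widehat{\xi}^k)\Vert^2_\mH$ (obtained by summing \eqref{ineq_conv}) combined with the quasi-monotonicity from \eqref{ineq_rate} and a geometric tail correction, so that the nonincreasing-plus-summable $\Rightarrow o(1/k)$ lemma applies, exactly as the paper does. Your treatment of parts \textbf{(a)} and \textbf{(b)} also coincides with the paper's, and your explicit tracking of the $\lambda_{\min}(\mH)^{-1/2}$ constant in part \textbf{(b)} is, if anything, slightly more careful than the paper's corresponding estimate.
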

	\begin{proof}
		In the analysis of convergence rate,
		we will use a common result \cite[Proposition 1]{PG-EXTRA}:
		Let $\left\{r_k\right\}_{k\ge0}$ be a non-negative scalar sequence, one has
		\begin{equation}\label{sublinear}
			\begin{aligned}
				\left.
				\begin{array}{c}
					0<r_{k+1}\le r_k,\ \forall \ k\ge0\\
					\sum_{k=0}^\infty r_k<\infty
				\end{array}
				\right\}
				\Longrightarrow
				r_k=o\left(\frac{1}{k}\right).
			\end{aligned}
		\end{equation}

		From Theorem {\bf\ref{THM_1}}, we know that the sequences $\left\{\xi^k\right\}_{k\ge0}$ and $\left\{\widehat{\xi}^k\right\}_{k\ge0}$
		are bounded. Suppose that $\left\Vert\widehat{\xi}^k-\xi^*\right\Vert\le\mu_1$
		for some constant $\mu_1>0$. Summing over $k$ from $0$ to $\infty$
		in \eqref{ineq_conv}, we have
		\begin{equation}\label{bounded_xi_hat}
			\sum_{k=0}^\infty\left\Vert\widehat{\xi^k}-\xi^k\right\Vert^2_\D
			\le\left\Vert\xi^0-\xi^*\right\Vert^2_\mH+
			2\mu_1\sum_{k=0}^\infty\varepsilon^k<\infty.
		\end{equation}
		Noticing that $\xi^k-\xi^{k+1}=M\left(\xi^k-\widehat{\xi}^k\right)$, and 
		the matrices $\mH$ and $\D$ are positive definite, we conclude that 
		the sequence $\left\{\Vert M(\xi^k-
		\widehat{\xi}^k)\Vert_{\mH}\right\}_{k\ge0}$ is summable. Next, we will construct
		a non-negative scalar sequence $\{r_k\}$ satisfying \eqref{sublinear} to complete the analysis of convergence rate. Using again the relation $Q\left(\xi^k-\widehat{\xi}^k\right)+V^k\in\Phi\left(\widehat{\xi^k}\right)$
		and the monotonicity of the operator $\Phi$, it follows that
		\begin{equation}\label{eq_E2}
			2\left\langle\widehat{\xi}^k-\widehat{\xi}^{k+1},
			Q\left(\left(\xi^k-\widehat{\xi}^k\right)-
			\left(\xi^k-\widehat{\xi}^k\right)\right)\right\rangle
			+2\left\langle\widehat{\xi}^k-\widehat{\xi}^{k+1},V^k-V^{k+1}\right\rangle\ge0.
		\end{equation}
		Since $2\xi^\top Q\xi=\xi^\top(Q^\top+Q)\xi$, $Q=\mH M,\ 
		\D=Q^\top+Q-M^\top\mH M$, we obtain
		\begin{align}
			&2\left\langle\widehat{\xi}^k-\widehat{\xi}^{k+1},
			Q\left(\left(\xi^k-\widehat{\xi}^k\right)-
			\left(\xi^{k+1}-\widehat{\xi}^{k+1}\right)\right)\right\rangle\notag\\
			= \ &2\left\langle\xi^k-\xi^{k+1},
			Q\left(\left(\xi^k-\widehat{\xi}^k\right)-
			\left(\xi^{k+1}-\widehat{\xi}^{k+1}\right)\right)\right\rangle\notag\\
			&-
			2\left\langle\xi^k-\widehat{\xi}^k-\left(\xi^{k+1}-\widehat{\xi}^{k+1}\right),
			Q\left(\left(\xi^k-\widehat{\xi}^k\right)-
			\left(\xi^{k+1}-\widehat{\xi}^{k+1}\right)\right)\right\rangle\notag\\
			= \ &2\left\langle M\left(\xi^k-\widehat{\xi}^k\right),
			HM\left(\left(\xi^k-\widehat{\xi}^k\right)-
			\left(\xi^k-\widehat{\xi}^k\right)\right)\right\rangle
			-\left\Vert\left(\xi^k-\widehat{\xi}^k\right)-
			\left(\xi^{k+1}-\widehat{\xi}^{k+1}\right)\right\Vert^2_{Q^\top+Q}\notag\\
			= \ &2\left\langle \xi^k-\widehat{\xi}^k,
			M^\top HM\left(\left(\xi^k-\widehat{\xi}^k\right)-
			\left(\xi^k-\widehat{\xi}^k\right)\right)\right\rangle-	
			\left\Vert\left(\xi^k-\widehat{\xi}^k\right)-
			\left(\xi^{k+1}-\widehat{\xi}^{k+1}\right)\right\Vert^2_{Q^\top+Q}.\label{eq_E3}
		\end{align}
		Let $a=M\left(\xi^k-\widehat{\xi}^k\right)$ and
		$b=M\left(\xi^{k+1}-\widehat{\xi}^{k+1}\right)$, it hods that
		
		\begin{align}
			&2\langle b,\mH(a-b)\rangle\notag\\
			= \ &2\left\langle M\left(\xi^{k+1}-\widehat{\xi}^{k+1}\right),
			HM\left(\left(\xi^k-\widehat{\xi}^k\right)-
			\left(\xi^{k+1}-\widehat{\xi}^{k+1}\right)\right)\right\rangle\notag\\
			= \ &-2\left\langle M\left(\left(\xi^k-\widehat{\xi}^k\right)-
			\left(\xi^{k+1}-\widehat{\xi}^{k+1}\right)\right),
			HM\left(\left(\xi^k-\widehat{\xi}^k\right)-
			\left(\xi^{k+1}-\widehat{\xi}^{k+1}\right)\right)\right\rangle\notag\\
			&+2\left\langle M\left(\xi^k-\widehat{\xi}^k\right),
			HM\left(\left(\xi^k-\widehat{\xi}^k\right)-
			\left(\xi^k-\widehat{\xi}^k\right)\right)\right\rangle\notag\\
			\overset{\eqref{eq_E3}}{=}& -2\left\Vert \left(\xi^k-\widehat{\xi}^k\right)
			-\left(\xi^{k+1}-\widehat{\xi}^{k+1}\right)\right\Vert^2_{M^\top\mH M}
			+\left\Vert \left(\xi^k-\widehat{\xi}^k\right)
			-\left(\xi^{k+1}-\widehat{\xi}^{k+1}\right)\right\Vert^2_{Q^\top+Q}\notag\\
			&+2\left\langle\widehat{\xi}^k-\widehat{\xi}^{k+1},
			Q\left(\left(\xi^k-\widehat{\xi}^k\right)-
			\left(\xi^{k+1}-\widehat{\xi}^{k+1}\right)\right)\right\rangle\notag\\
			\overset{\eqref{eq_E2}}{\ge}& \left\Vert \left(\xi^k-\widehat{\xi}^k\right)
			-\left(\xi^{k+1}-\widehat{\xi}^{k+1}\right)\right\Vert^2_{Q^\top+Q-2M^\top\mH M}
			-2\left\langle\widehat{\xi}^k-\widehat{\xi}^{k+1},V^k-V^{k+1}\right\rangle.
			\label{eq_E4}
		\end{align}
		Applying the identity $2\langle b,\mH(a-b)\rangle=\Vert a\Vert^2_\mH-\Vert 
		b\Vert^2_\mH-\Vert a-b\Vert^2_\mH$ yields
		\begin{align*}
			&\hspace{0.2em}\Vert a\Vert^2_\mH-\Vert 
			b\Vert^2_\mH-\Vert a-b\Vert^2_\mH\\
			=&\left\Vert M\left(\xi^k-\widehat{\xi}^k\right)\right\Vert^2_\mH-
			\left\Vert M\left(\xi^{k+1}-\widehat{\xi}^{k+1}\right)\right\Vert^2_\mH-
			\left\Vert M\left(\xi^k-\widehat{\xi}^k\right)-
			M\left(\xi^{k+1}-\widehat{\xi}^{k+1}\right)\right\Vert^2_\mH\notag\\
			=\ & \left\Vert M\left(\xi^k-\widehat{\xi}^k\right)\right\Vert^2_\mH-
			\left\Vert M\left(\xi^{k+1}-\widehat{\xi}^{k+1}\right)\right\Vert^2_\mH-
			\left\Vert \left(\xi^k-\widehat{\xi}^k\right)-
			\left(\xi^{k+1}-\widehat{\xi}^{k+1}\right)\right\Vert^2_{M^\top\mH M}\notag\\
			=\ & 2\langle b,\mH(a-b)\rangle\notag\\
			\overset{\eqref{eq_E4}}{\ge}\hspace{-0.2em}& \ 
			\left\Vert \left(\xi^k-\widehat{\xi}^k\right)
			-\left(\xi^{k+1}-\widehat{\xi}^{k+1}\right)\right\Vert^2_{Q^\top+Q-2M^\top\mH M}
			-2\left\langle\widehat{\xi}^k-\widehat{\xi}^{k+1},V^k-V^{k+1}\right\rangle.
		\end{align*}
		The last inequality can be rewritten as
		\begin{align*}
			&\left\Vert M\left(\xi^{k+1}-\widehat{\xi}^{k+1}\right)\right\Vert^2_\mH+
			\left\Vert \left(\xi^k-\widehat{\xi}^k\right)
			-\left(\xi^{k+1}-\widehat{\xi}^{k+1}\right)\right\Vert^2_\D \\
			\le \ &\left\Vert M\left(\xi^k-\widehat{\xi}^k\right)\right\Vert^2_\mH
			+2\left\langle\widehat{\xi}^k-\widehat{\xi}^{k+1},V^k-V^{k+1}\right\rangle,
		\end{align*}
		which is exactly \eqref{ineq_rate}. The rest is to prove the $o(1/k)$ 
		convergence rate.
		
		\begin{enumerate}[{\bf(a)}]
			\item \emph{Successive difference}: since the sequence 
			$\left\{\widehat{\xi}^k\right\}_{k\ge0}$ is bounded, $\varepsilon_i^k=O((\tau_i)^k)$,
			and $\Vert V^k\Vert\le\varepsilon^k$, without loss of generality, 
			we assume that there exists constant $\widehat{\mu}>0$ such that
			\begin{equation*}
				\left\vert2\left\langle\widehat{\xi}^k-\widehat{\xi}^{k+1},
				V^k-V^{k+1}\right\rangle\right\vert\le\widehat{\mu}(\tau_{\max})^k.
			\end{equation*}
			Using the error bound \eqref{ineq_rate}, we then further obtain
			\begin{align*}
				\left\Vert M\left(\xi^{k+1}-\widehat{\xi}^{k+1}\right)\right\Vert^2_\mH
				\le\left\Vert M\left(\xi^k-\widehat{\xi}^k\right)\right\Vert^2_\mH
				+\widehat{\mu}(\tau_{\max})^k,
			\end{align*}
			which is equivalent to
			\begin{align*}
				\left\Vert M\left(\xi^{k+1}-\widehat{\xi}^{k+1}\right)\right\Vert^2_\mH
				+\frac{\widehat{\mu}}{1-\tau_{\max}}(\tau_{\max})^{k+1}
				\le\left\Vert M\left(\xi^k-\widehat{\xi}^k\right)\right\Vert^2_\mH
				+\frac{\widehat{\mu}}{1-\tau_{\max}}(\tau_{\max})^k.
			\end{align*}
			Denote 
			\begin{equation*}
				r_k=\left\Vert M\left(\xi^k-\widehat{\xi}^k\right)\right\Vert^2_\mH
				+\frac{\widehat{\mu}}{1-\tau_{\max}}(\tau_{\max})^k.
			\end{equation*}
			Now we have $0<r_{k+1}\le r_k, \forall\ k\ge0$. In addition, the facts that 
			$\tau_{\max}\in(0,1)$ and  the sequence $\left\{\Vert M(\xi^k-
			\widehat{\xi}^k)\Vert_{\mH}\right\}_{k\ge0}$ is summable jointly indicate that
			$\sum_{k=0}^\infty r_k<\infty$.
			According to \eqref{sublinear}, we have
			\begin{equation*}
				r_k=\left\Vert M\left(\xi^k-\widehat{\xi}^k\right)\right\Vert^2_\mH
				+\frac{\widehat{\mu}}{1-\tau_{\max}}(\tau_{\max})^k=o\left(\frac{1}{k}\right),
			\end{equation*}
			which implies that
			\begin{equation*}
				\left\Vert\xi^k-\xi^{k+1}\right\Vert^2_\mH=\left\Vert M\left(\xi^k-\widehat{\xi}^k\right)\right\Vert^2_\mH
				=o\left(\frac{1}{k}\right)+O\left((\tau_{\max})^k\right).
			\end{equation*}
			
			\item \emph{First-order optimality residual}: note that
			$\mH\left(\xi^k-\xi^{k+1}\right)+V^k\in
			\Phi\left(\widehat{\x}^k,\widehat{\Y}^k,
			\widehat{\Z}^k\right)$, and 
			\begin{equation*}
				\left\Vert M\left(\xi^k-\widehat{\xi}^k\right)\right\Vert^2_\mH
				+\frac{\widehat{\mu}}{1-\tau_{\max}}(\tau_{\max})^k=o\left(\frac{1}{k}\right),
			\end{equation*}
			hence it holds that
			\begin{align*}
				\mathrm{dist}^2\left(0,\Phi\left(\widehat{\x}^k,\widehat{\Y}^k,
				\widehat{\Z}^k\right)\right)
				\le \ &\left\Vert \mH\left(\xi^k-\xi^{k+1}\right)
				+V^k\right\Vert^2\\
				\le \ & \left\Vert\xi^k-\xi^{k+1}\right\Vert^2_\mH
				+2\left\Vert\mH\left(\xi^k-\xi^{k+1}\right)\right\Vert\left\Vert V^k\right\Vert
				+\left\Vert V^k\right\Vert^2\\
				= \ &o\left(\frac{1}{k}\right)+O((\tau_{\max})^k).\qedhere
			\end{align*}.
		\end{enumerate}
	\end{proof}
	Theorem {\bf\ref{THM_1}} and {\bf\ref{THM_2}} demonstrate the practicality of the inexact-DPMM algorithm in comparison to those algorithms that assume exact solutions for subproblems. The inexact-DPMM algorithm not only provides a convergence guarantee but also a convergence rate of $o(1/k)$ for the first-order optimality residual. It's obvious that if the subproblem is solved exactly, 
	i.e., $\varepsilon_i^k\equiv0 \text{\ for any} \ i\in\V, k\ge0$, 
	the convergence and rate of the DPMM algorithm remain valid.
	
	\subsection{Linear Rate Under Structural Assumption}
	In this subsection, by some techniques of variational analysis, we will present the linear convergence rate of the proposed DPMM algorithm. The notion of metric subregularity is fundamental for the convergence analysis of proximal algorithms.
	\begin{defi}[metric subregularity, \cite{Rockafellar09}]
		A set-valued mapping $S:\mR^n\rightrightarrows\mR^n$ is said to be metrically 
		subregular at $(\bar{u},\bar{v})\in{\rm gph}(S)$, if for some $\epsilon>0$, there
		exists $\kappa\ge0$ such that
		\begin{equation*}
			\mathrm{dist}(u,S^{-1}(\bar{v}))\le\kappa \ \mathrm{dist}(\bar{v},S(u)), \ 
			\forall \ u\in \mathbf{B}_\epsilon(\bar{u}),
		\end{equation*}
		where $\dist(u,S^{-1}(\bar{v}))\triangleq\inf
		\left\{\Vert u-u^\prime\Vert:\ u^\prime\in S^{-1}(\bar{v})\right\}$, 
		$\mathrm{gph}(S)=\{(u,v):v\in S(u)\}$, $S^{-1}(v)=\{u: v\in S(u)\}$, and
		$\mathbf{B}_\epsilon(\bar{u})=\{u: \Vert u-\bar{u}\Vert\le\epsilon\}$.
	\end{defi}
	If $S^{-1}(\bar{v})$ characterizes the optimal solution set of a given optimization problem, then $\bar{u}\in S^{-1}(\bar{v})$ is an optimal solution. The metric subregularity serves as an error bound condition at the optimal solution $\bar{u}$, it plays an important role in the convergence rate analysis of optimization algorithms. To ensure that this metric subregularity holds in this paper, we need the following structural assumption, which is also considered in References \cite{YeJJ21, Yuan20, Luo92}, etc.
	\begin{ass}[Structural assumption for \eqref{P}] \label{ass_4}
		\hspace{1em}
		\begin{enumerate}[{\bf(a)}]
			\item The local objective $f_i(\x_i)=h_i(B_i\x_i)+\langle q_i,\x_i\rangle+r_i(\x_i)$,
			where $h_i$ is a smooth and essentially locally strongly convex function,
			i.e., $h_i$ is strongly convex on any compact and convex subset.
			$B_i$ and $q_i$ are a given matrix and vector, respectively, and the
			subdifferential $\partial r_i(\x_i)$ of the 
			non-smooth term $r_i$ is a polyhedral set, $\forall \ i=1,2,\dots,m$.
			
			\item The constraint function $G_i(\x_i)=C_i\x_i-d_i$, 
			which holds if each $g_i$
			is affine, $\forall \ i=1,2,\dots,m$.
			\item The local constraint set $\Omega_i$ is a convex polyhedron, $\forall \ i=1,2,\dots,m$.
		\end{enumerate}
		\begin{rem}\label{rem_3}
			The structural assumption {\bf\ref{ass_4}} is not too restrictive, there are many commonly encountered optimization problems meeting it, such as regression problems in machine learning, likelihood estimation in statistics, and constrained LASSO problems. If each $r_i$ is a polyhedral
			convex function, which means that its epigraph 
			$\mathrm{epi}(r_i)=\{(\x_i,\alpha): \alpha\ge r_i(\x_i)\}$ is a  polyhedral set, or $r_i$ is a convex piecewise linear-quadratic function, the
			subdifferential $\partial r_i(\x_i)$ is satisfied to be a polyhedral multifunction. In Table {\bf\ref{table_2}}, we give some specific examples of $h_i$ and $r_i$ that satisfy Assumption {\bf\ref{ass_4}}, respectively.
		\end{rem}
	\end{ass}  
	\begin{table}[htbp]
		\begin{threeparttable}
			\centering
			\caption{Some commonly used functions $h_i$ and $r_i$.}
			\label{table_2}
			\begin{small}
				\begin{tabular}{c|c|c|c|c}
					\toprule
					\tabincell{l}{Obejective
						\\ function} & \tabincell{l}{Linear
						\\ regression}
					& \tabincell{l}{Logistic
						\\ regression} & 
					\tabincell{l}{Likelihood
						\\ estimation} &
					\tabincell{l}{ Poisson
						\\ regression}\\
					\midrule
					$h_i(\x_i)$ & $\frac{1}{2}\Vert\x_i-b\Vert_2^2$ 
					&\tabincell{l}{$\mathop{\sum}\limits_j\log(1+e^{\x_i(j)})$
						\\ $-b^\top\x_i$}
					&$-\mathop{\sum}\limits_j\log(\x_i(j))+b^\top\x_i$
					& $\mathop{\sum}\limits_je^{\x_i(j)}-b^\top\x_i$\\
					\bottomrule
					\specialrule{0em}{3pt}{3pt}
					\toprule
					regularizer & $\ell_1$-norm & elastic net \cite{Zou05} 
					& fused LASSO \cite{Tibshirani04}& OSCAR \cite{Bondell08} \\
					\midrule
					$r_i(\x_i)$ & $\mu\Vert\x_i\Vert_1$ 
					& $\mu_1\Vert\x_i\Vert_1+\mu_2\Vert\x_i\Vert_2^2$ &
					\tabincell{l}{$\mu_1\mathop{\sum}\limits_j\vert\x_i(j)-\x_i(j+1)\vert$ 
						\\ $+\mu_2\Vert\x_i\Vert_1$}
					& \tabincell{l}
					{$\mu_1\mathop{\sum}\limits_{r<l}\max\{\vert\x_i(r)\vert,\vert\x_i(l)\vert\}$
						\\ $+\mu_2\Vert\x_i\Vert_1$}
					\\
					\bottomrule
				\end{tabular}
			\end{small}
		\end{threeparttable}
	\end{table}
	
	Under Assumption {\bf\ref{ass_4}}, the first-order optimality condition \eqref{FOOC}  has the form
	\begin{equation}\label{struc_phi}
		\begin{aligned}
			0\in \Phi(\x,\Y,\Z)=\left(
			\begin{array}{c}
				\widetilde{B}^\top\nabla h(\widetilde{B}\x)+q+\partial r(\x)+
				\widetilde{C}^\top\Y+\mathrm{N}_\Omega(\x)\\
				-\widetilde{C}\x+d+\U^\top\Z+\mathrm{N}_{\K_m^\circ}(\Y)\\
				-\U\Y
			\end{array}\right),
		\end{aligned}
	\end{equation}
	where $\widetilde{B}=\mathrm{diag}(B_i)_{i=1}^m$,  $\widetilde{C}=\mathrm{diag}(C_i)_{i=1}^m$, $h(\x)=\sum_{i=1}^mh_i(\x_i)$,
	$r(\x)=\sum_{i=1}^mr_i(\x_i)$, $q=\mathrm{col}(q_i)_{i=1}^m$, 
	$d=\mathrm{col}(d_i)_{i=1}^m$, $\Omega=\Omega_1\times\cdots\times\Omega_m$, 
	$\K_m^\circ$= $\underbrace{\K^\circ\times\cdots\times\K^\circ}_{m}$. The symbols $\mathrm{N}_\Omega(x)$ and $\mathrm{N}_{\K_m^\circ}(\Y)$ represent the 
	normal cones of the sets $\Omega$ at $\x$ and $\K_m^\circ$ at $\Y$, respectively.
	
	Under the strong convexity assumption of $h$, a fact is that the linear mapping 
	$\x \to \widetilde{B}\x$ is invariant over the optimal solution set $\mathcal{X}^*$ of \eqref{P} \cite[Lemma 2.1]{Luo92}. We then provide an alternative characterization of the first-order optimality condition \eqref{struc_phi}.
	\begin{Prop}\label{PROP_2}
		Suppose that Assumption {\bf\ref{ass_4}} is satisfied, there exists constant
		vectors $\tilde{t}$ and $\widetilde{\zeta}=\widetilde{B}^\top\nabla 
		h(\tilde{t})+q$, such that 
		\begin{align*}
			\M^*\triangleq\Phi^{-1}(0)=\bigg\{\xi=(\x,\Y,\Z):\
			&0\in\widetilde{\zeta}+\partial r(\x)+\widetilde{C}^\top\Y+\mathrm{N}_\Omega(\x),\\
			&0\in-\widetilde{C}\x+d+\U^\top\Z+\mathrm{N}_{\K_m^\circ}(\Y),\\
			&\widetilde{B}\x-\tilde{t}=0,-\U\Y=0
			\bigg\}.
		\end{align*}
	\end{Prop}
	\begin{proof}
		The proof is rather standard by \cite[Lemma 2.1]{Luo92} and thus is omitted here.
	\end{proof}
	We introduce the following perturbed set-valued mapping
	\begin{align*}
		\Psi(p)=\bigg\{\xi:& \ p_1=\widetilde{B}\x-\tilde{t},p_2=-\U\Y,
		p_3\in\widetilde{\zeta}+\partial 
		r(\x)+\widetilde{C}^\top\Y+\mathrm{N}_\Omega(\x),\\
		& p_4\in -\widetilde{C}\x+d+\U^\top\Z+\mathrm{N}_{\K_m^\circ}(\Y)\bigg\},
	\end{align*}
	where $p=(p_1,p_2,p_3,p_4)$. It is obvious that $\Psi(0)=\M^*$.
	Note that the normal cones $\mathrm{N}_\Omega(\x)$ and $\mathrm{N}_{\K_m^\circ}(\Y)$
	are the subdifferential of the indicator function $I_\Omega(x)$ and
	$I_{\K_m^\circ}(\Y)$, respectively, where $\Omega$ and $\K_m^\circ$ are
	polyhedral sets. Combining with the structured assumption, it holds that 
	$\Psi$ is a polyhedral set-valued mapping, as well as its inverse $\Psi^{-1}$.
	According to \cite[Proposition {\bf 1}]{Robinson1981}, $\Psi^{-1}$ is metrically subregular at any point $(p,\xi)\in\mathrm{gph}(\Psi)$.
	On the other hand, similar to 
	\cite[Proposition 40]{Yuan20} and \cite[Proposition 6]{YeJJ21}, we have the following
	equivalence with respect to the metric subregularity between $\Psi^{-1}$ and $\Phi$.
	\begin{Prop}\label{PROP_3}
		Suppose that Assumption {\bf\ref{ass_4}} is satisfied, then the metric subregularity conditions of $\Psi^{-1}$ and $\Phi$ are equivalent. Precisely, 
		given $\xi^*\in\M^*$,  the following two statements are equivalent:
		\begin{enumerate}[{\bf(a)}]
			\item There exists $\kappa_1, \epsilon_1>0$ such that
			\begin{equation*}
				\mathrm{dist}(\xi,\Psi(0))\le\kappa_1\mathrm{dist}(0,\Psi^{-1}(\xi)),
				\forall \ \xi\in\mathbf{B}_{\epsilon_1}(\xi^*).
			\end{equation*}
			\item There exists $\kappa_2, \epsilon_2>0$ such that
			\begin{equation*}
				\mathrm{dist}(\xi,\Phi^{-1}(0))\le\kappa_2\mathrm{dist}(0,\Phi(\xi)),
				\forall \ \xi\in\mathbf{B}_{\epsilon_2}(\xi^*).
			\end{equation*}
		\end{enumerate}
	\end{Prop}
	Since the metric subregularity of $\Psi^{-1}$ holds at any point 
	$(p,\xi)\in\mathrm{gph}(\Psi)$, by
	Proposition {\bf\ref{PROP_3}}, it follows that $\Phi$ is metrically subregular 
	at $(\xi^*,0)$ for any $\xi^*\in\M^*=\Phi^{-1}(0)$. Now, we show the linear 
	convergence rate of DPMM as follows
	\begin{thm}\label{THM_3}
		Under Assumptions {\bf\ref{ass_1}, \ref{ass_2}, \ref{ass_3}}, and {\bf\ref{ass_4}},
		if the algorithmic parameters meet Proposition {\bf\ref{PROP_1}}, and 
		$\varepsilon_i^k\le\delta_i^k\Vert\x_i^{k+1}-\x_i^k\Vert$ with $\sum_{i=1}^m\delta_i^k<+\infty$, $\forall \ i=1,2,\dots,m$, the sequence
		$\{\xi^k\}_{k\ge0}$ generated by Algorithm {\bf\ref{alg_1}} satisfies
		\begin{equation}
			\mathrm{dist}_\mH(\xi^{k+1},\M^*)\le\varrho_k\ \mathrm{dist}_\mH(\xi^k,\M^*),
			\quad\forall \ k\ge\tilde{k},
		\end{equation}
		where $\dist_\mH(\xi^k,\M^*)=\inf_{\xi\in\M^*}\Vert\xi^k-\xi\Vert_\mH$,
		$\tilde{k}$ is a sufficiently large positive integer, 
		\begin{align*}
			\varrho_k &= \frac{\omega c_4\delta^k+\varrho}{1-c_4\delta^k}\to \varrho<1, 
			\quad \text{as} \ k\to\infty,\\ 
			\varrho & = \sqrt{1-\left(\frac{c_3}{c_2(1+\kappa c_1c_2)}\right)^2}<1,
		\end{align*}
		$\kappa$ is the metric subregularity constant of the operator $\Phi$ at $(\xi^*,0)$, 
		$c_1=(\lambda_{\max}(Q^\top Q)/\lambda_{\min}(\D))^{1/2}$,
		$c_2=\sqrt{\lambda_{\max}(\mH)}$, $c_3=\sqrt{\lambda_{\min}(\D)}<c_2$,
		$c_4=\sqrt{\lambda_{\max}(M^\top\mH M)}/\sqrt{\lambda_{\min}(\mH)}$,
		$\omega=1+c_2c_4/c_3$, and $\delta^k=\mathop{\max}\limits_{1\le i\le m}\delta_i^k,
		\ \forall \ k\ge0$.
	\end{thm}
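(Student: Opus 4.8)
The plan is to combine two ingredients already prepared in the paper: the quasi-descent inequality \eqref{ineq_conv} of Theorem \ref{THM_1}(a), and the metric subregularity of $\Phi$ at $(\xi^*,0)$ for every $\xi^*\in\M^*$, which was secured through Propositions \ref{PROP_2} and \ref{PROP_3}. Throughout I would fix $\xi^*$ to be the $\mH$-projection of $\xi^k$ onto the (closed convex) solution set $\M^*=\Phi^{-1}(0)$, so that $\|\xi^k-\xi^*\|_\mH=\dist_\mH(\xi^k,\M^*)$ while $\|\xi^{k+1}-\xi^*\|_\mH\ge\dist_\mH(\xi^{k+1},\M^*)$. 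Writing $d_k=\dist_\mH(\xi^k,\M^*)$ and $e_k=\|\xi^k-\widehat\xi^k\|_\D$, inequality \eqref{ineq_conv} becomes
\begin{equation*}
d_{k+1}^2\le d_k^2-e_k^2+2\left\langle\widehat\xi^k-\xi^*,V^k\right\rangle .
\end{equation*}
In the exact case $V^k=0$ a contraction follows immediately once $e_k$ is bounded below by a multiple of $d_k$; the whole difficulty lies in (i) producing that lower bound from subregularity and (ii) absorbing the inexactness.

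For step (i), Lemma \ref{LEM_2} supplies the inclusion $Q(\xi^k-\widehat\xi^k)+V^k\in\Phi(\widehat\xi^k)$, hence $\dist(0,\Phi(\widehat\xi^k))\le\|Q(\xi^k-\widehat\xi^k)\|+\|V^k\|\le c_1e_k+\|V^k\|$, where $\|Q(\xi^k-\widehat\xi^k)\|\le c_1\|\xi^k-\widehat\xi^k\|_\D$ uses $c_1c_3=\sqrt{\lambda_{\max}(Q^\top Q)}$ and $c_3\|v\|\le\|v\|_\D$. Since $\xi^k\to\xi^\infty\in\M^*$ by Theorem \ref{THM_1}(c) and $\xi^k-\widehat\xi^k=M^{-1}(\xi^k-\xi^{k+1})\to 0$, the predictor $\widehat\xi^k$ also converges to $\xi^\infty$, so for all $k\ge\tilde k$ it lies in the subregularity neighbourhood of $\xi^\infty$ and the error bound applies. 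Applying it with constant $\kappa$ and then the triangle inequality together with the norm equivalences $\dist_\mH(\cdot,\M^*)\le c_2\,\dist(\cdot,\M^*)$ and $\|\xi^k-\widehat\xi^k\|\le e_k/c_3$ yields an estimate of the form
\begin{equation*}
d_k\le\frac{c_2(1+\kappa c_1 c_2)}{c_3}\,e_k+c_2\kappa\,\|V^k\| ,
\end{equation*}
equivalently $e_k\ge\tfrac{c_3}{c_2(1+\kappa c_1 c_2)}\,d_k$ up to a $\|V^k\|$-correction; here $\varrho=\sqrt{1-(c_3/(c_2(1+\kappa c_1 c_2)))^2}$ enters, and the inner $c_2$ (rather than $c_3$) is the result of coarsening one intermediate factor.

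For step (ii), the relative-error rule $\varepsilon_i^k\le\delta_i^k\|\x_i^{k+1}-\x_i^k\|$ gives $\|V^k\|\le\varepsilon^k\le\delta^k\|\x^{k+1}-\x^k\|\le\delta^k\|\xi^{k+1}-\xi^k\|$, so the noise is a $\delta^k$-multiple of the \emph{forward} difference, controlled through $\|\xi^{k+1}-\xi^k\|=\|M(\xi^k-\widehat\xi^k)\|$ and $c_4=\sqrt{\lambda_{\max}(M^\top\mH M)}/\sqrt{\lambda_{\min}(\mH)}$. I would substitute the lower bound on $e_k$ from (i) into the reduced descent inequality, bound the cross term by Cauchy--Schwarz via $\|\widehat\xi^k-\xi^*\|_\mH\le\|\widehat\xi^k-\xi^{k+1}\|_\mH+\|\xi^{k+1}-\xi^*\|_\mH$, and collect the $\delta^k$-terms. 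Because the noise carries the factor $\|\xi^{k+1}-\xi^k\|$ and because $\|\xi^{k+1}-\xi^*\|_\mH$ (bounded below by $d_{k+1}$) reappears inside it, the resulting scalar inequality takes the self-referential shape $(1-c_4\delta^k)\,d_{k+1}\le(\varrho+\omega c_4\delta^k)\,d_k$, in which $c_4\delta^k$ on the left is exactly the contribution of the forward difference to $d_{k+1}$ and $\omega=1+c_2c_4/c_3$ collects the $d_k$-proportional part. Dividing by $1-c_4\delta^k$ gives $d_{k+1}\le\varrho_k d_k$ with $\varrho_k=(\omega c_4\delta^k+\varrho)/(1-c_4\delta^k)$, and since $\delta^k\to 0$ (the precision ratios being summable) we get $\varrho_k\to\varrho<1$; enlarging $\tilde k$ so that also $1-c_4\delta^k>0$ and $\varrho_k<1$ completes the claim.

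I anticipate the main obstacle to be the bookkeeping in step (ii): organising the inexactness so that every noise contribution reduces to $\delta^k$-multiples of the single quantity $e_k$ together with $d_k$ and $d_{k+1}$, and then performing the algebra (passing from the squared descent to a linear inequality and isolating the $d_{k+1}$-proportional piece) that produces precisely the factor $(1-c_4\delta^k)^{-1}$ and the constant $\omega=1+c_2c_4/c_3$. Step (i) is comparatively routine given Propositions \ref{PROP_2}--\ref{PROP_3}, the only subtlety being the choice of $\xi^*$ as the $\mH$-projection of $\xi^k$ so that $d_k$ and $d_{k+1}$ appear on the correct sides.
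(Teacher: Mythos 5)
Your proposal is correct in its overall strategy and shares the paper's key ingredients (the descent inequality \eqref{ineq_conv}, metric subregularity secured via Propositions \ref{PROP_2}--\ref{PROP_3}, and the same chain of constants $c_1,c_2,c_3$ producing the same limiting factor $\varrho$), but it handles the inexactness by a genuinely different mechanism. The paper never applies subregularity at the inexact predictor $\widehat{\xi}^k$: instead it introduces an auxiliary \emph{exact} prediction-correction pair $\widetilde{\eta}^k=(Q+\Phi)^{-1}(Q\xi^k)$, $\eta^{k+1}=\xi^k-M(\xi^k-\widetilde{\eta}^k)$ launched from the current inexact iterate, proves the clean contraction $\dist_\mH(\eta^{k+1},\M^*)\le\varrho\,\dist_\mH(\xi^k,\M^*)$ in this noise-free setting (no cross terms to absorb), and then transfers it to $\xi^{k+1}$ by the non-expansiveness of $(Q+\Phi)^{-1}$, which gives $\Vert\xi^{k+1}-\eta^{k+1}\Vert_\mH\le c_4\Vert V^k\Vert_\mH\le c_4\delta^k\Vert\xi^{k+1}-\xi^k\Vert_\mH$, followed by a triangle-inequality argument through the projection $\mP_{\M^*}^\mH(\eta^{k+1})$. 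You instead keep $V^k$ inside both the squared descent inequality and the subregularity estimate (via the perturbed inclusion $Q(\xi^k-\widehat{\xi}^k)+V^k\in\Phi(\widehat{\xi}^k)$) and grind through. That route does work and buys you economy (no auxiliary sequences), but two caveats: first, your claim that the bookkeeping lands exactly on $(1-c_4\delta^k)\,d_{k+1}\le(\varrho+\omega c_4\delta^k)\,d_k$ is not substantiated, and is almost certainly not what your route yields --- the constant $c_4$ and the multiplicative structure $(1-c_4\delta^k)^{-1}$, $\omega=1+c_2c_4/c_3$ are artifacts of the paper's non-expansiveness comparison between $\xi^{k+1}$ and the auxiliary $\eta^{k+1}$, an object that does not exist in your framework; second, starting from a \emph{squared} inequality with noise terms proportional to $\delta^k(d_k^2+d_kd_{k+1}+d_{k+1}^2)$, the passage to a linear contraction goes through Young's inequality and a square root, producing a factor of the form $\sqrt{(\varrho^2+C\delta^k)/(1-C'\delta^k)}$ rather than the stated $\varrho_k$. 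Since both expressions tend to $\varrho<1$, your argument still establishes Q-linear convergence with the same asymptotic rate, i.e., the substance of the theorem, but not the literal $\varrho_k$ in the statement; the paper's detour through the exact auxiliary step is precisely what makes those explicit constants fall out cleanly.
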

	\begin{proof}
	   	From Lemma {\bf\ref{LEM_2}}, it holds that
	   \begin{align}\label{delta}
	   	\Vert V^k\Vert\le\varepsilon^k\le\delta^k\Vert\x^{k+1}-\x^k\Vert,\quad
	   	\sum_{k=0}^\infty\delta^k<\infty.
	   \end{align}
	   Introduce the following notations
	   \begin{subequations}\label{exact}
	   	\begin{align}
	   		\widetilde{\eta}^k&=(Q+\Phi)^{-1}(Q\xi^k),\label{exact_a}\\
	   		\eta^{k+1}&=\xi^k-M(\xi^k-\widetilde{\eta}^k).\label{exact_b}
	   	\end{align}
	   \end{subequations}
	   They are the exact schemes of DPMM at each iteration $k$. We will estimate the error bound between \eqref{exact} and the inexact DPMM \eqref{error_PPA} to derive the convergence rate. It is obvious that \eqref{exact} is a special case of \eqref{error_PPA} with $V^k=0$. Similar to \eqref{ineq_conv} in Theorem 
	   {\bf\ref{THM_1}}, we have 
	   \begin{equation}\label{eta_bound}
	   	\Vert\eta^{k+1}-\xi^*\Vert^2_\mH\le\Vert\xi^k-\xi^*\Vert^2_\mH
	   	-\Vert\xi^k-\widetilde{\eta}^k\Vert^2_\D, \ \forall \ \xi^*\in\M^*.
	   \end{equation}
	   From Theorem {\bf\ref{THM_1}}, the sequence $\{\xi^k\}_{k\ge0}$ converges to some $\xi^\infty\in\M^*$. In view of \eqref{eta_bound}, it holds that $\eta^{k+1},\ \widetilde{\eta}^k\to\xi^\infty$, 
	   and $\Vert\xi^k-\widetilde{\eta}^k\Vert\to 0$. On the other hand, \eqref{exact_a}
	   implies that $Q(\xi^k-\widetilde{\eta}^k)\in\Phi(\widetilde{\eta}^k)$, thus
	   \begin{align}\label{c_1}
	   	\dist(0,\Phi(\widetilde{\eta}^k))\le\Vert Q\left(\xi^k-
	   	\widetilde{\eta}^k\right)\Vert\le c_1\Vert\xi^k-\widetilde{\eta}^k\Vert_\D, 
	   \end{align}
	   where $c_1=(\lambda_{\max}(Q^\top Q)/\lambda_{\min}(\D))^{1/2}$. since 
	   $\widetilde{\eta}^k\to\xi^\infty$, for any $\epsilon>0$, there exists a positive integer $\tilde{k}>0$ such that $\widetilde{\eta}^k\in\mathbf{B}_\epsilon(\xi^\infty),
	   \forall \ k\ge\widetilde{k}$. Using the metric subregularity of $\Phi$ at 
	   $(\xi^\infty,0)$ (cf. Proposition {\bf\ref{PROP_3}}), there are constants 
	   $\kappa>0, \epsilon>0$ satisfying
	   \begin{align}
	   	&\dist(\widetilde{\eta}^k,\M^*)=\dist(\widetilde{\eta}^k,\Phi^{-1}(0))\notag\\
	   	\le&\ \kappa \ \dist(0,\Phi(\widetilde{\eta}^k))
	   	\overset{\eqref{c_1}}{\le} \kappa c_1 \Vert\xi^k-\widetilde{\eta}^k\Vert_\D, 
	   	\ \forall \ k\ge\widetilde{k}.\label{kappa_c_1}
	   \end{align}
	   Since $\mH\succcurlyeq\D\succ0$, we have 
	   \begin{align}
	   	&\dist(\widetilde{\eta}^k,\M^*)\ge\frac{1}{c_2}\dist_\D(\widetilde{\eta}^k,\M^*)
	   	\notag\\
	   	\ge&\frac{1}{c_2}\bigg(\dist_\D(\xi^k,\M^*)-
	   	\Vert\xi^k-\widetilde{\eta}^k\Vert_\D\bigg),\label{c_2}
	   \end{align}
	   where $c_2=\sqrt{\lambda_{\max}(\mH)}$.
	   Formulas \eqref{kappa_c_1} and \eqref{c_2} jointly imply that
	   \begin{align}
	   	&\dist_\mH(\xi^k,\M^*)\le\frac{c_2}{c_3}\dist_\D(\xi^k,\M^*)\notag\\
	   	\le&\ \frac{c_2(1+\kappa c_1c_2)}{c_3}\Vert\xi^k-\widetilde{\eta}^k\Vert_\D, \
	   	\forall \ k\ge\widetilde{k},\label{c_3}
	   \end{align}
	   where $c_3=\sqrt{\lambda_{\min}(\D)}$. Since $\M^*$ is closed and convex, 
	   then $\dist_\mH(\xi^k,\M^*)=\Vert\xi^k-\xi^*\Vert_{\mH}$ for some $\xi^*\in\M^*$.
	   By \eqref{eta_bound}, it follows that
	   \begin{align*}
	   	&\dist^2_\mH(\xi^k,\M^*)-\dist^2_\mH(\eta^{k+1},\M^*)\ge
	   	\Vert\xi^k-\widetilde{\eta}^k\Vert_\D\\
	   	\ge& \ \left(\frac{c_3}{c_2(1+\kappa c_1c_2)}\right)^2\dist^2_\mH(\xi^k,\M^*), \
	   	\forall \ k\ge\widetilde{k},
	   \end{align*}
	   which entails 
	   \begin{equation}\label{varrho}
	   	\dist_\mH(\eta^{k+1},\M^*)\le\varrho \ \dist_\mH(\xi^k,\M^*), \ \forall \ 
	   	k\ge\widetilde{k},
	   \end{equation}
	   where $\varrho=\sqrt{1-\left(\frac{c_3}{c_2(1+\kappa c_1c_2)}\right)^2}<1$ 
	   (noticing $c_3<c_2$ ). Moreover, From \eqref{eta_bound}, we have
	   \begin{equation*}
	   	\frac{c_3}{c_2}\Vert\xi^k-\widetilde{\eta}^k\Vert_\mH\le
	   	\Vert\xi^k-\widetilde{\eta}^k\Vert_\D\le\dist_\mH(\xi^k,\M^*),\ \forall \ k\ge0,
	   \end{equation*}
	   equivalently,
	   \begin{equation}\label{F9}
	   	\Vert\xi^k-\widetilde{\eta}^k\Vert_\mH\le\frac{c_2}{c_3}\ \dist_\mH(\xi^k,\M^*), 
	   	\forall \ k\ge0.
	   \end{equation}
	   Define the projection $\mP_{\M^*}^\mH(\xi)=\arg\min_{u\in\M^*}\Vert\xi-u\Vert_\mH$.
	   Using the basic inequality $\Vert a+b\Vert\le\Vert a\Vert+\Vert b\Vert$
	   and the non-expansiveness of the projection mapping $\mP_{\M^*}^\mH$, we have
	   \begin{align}
	   	\Vert\xi^k-\mP_{\M^*}^\mH(\eta^{k+1})\Vert_\mH\le&\Vert\mP_{\M^*}^\mH(\xi^k)
	   	-\mP_{\M^*}^\mH(\eta^{k+1})\Vert_\mH+\dist_\mH(\xi^k,\M^*)\notag\\
	   	\le& \Vert\xi^k-\eta^{k+1}\Vert_\mH+\dist_\mH(\xi^k,\M^*)\notag\\
	   	=& \Vert M(\xi^k-\widetilde{\eta}^k)\Vert_\mH+\dist_\mH(\xi^k,\M^*)\notag\\
	   	\overset{\eqref{F9}}{\le}&\omega\ \dist_\mH(\xi^k,\M^*),\label{omega}
	   \end{align}
	   where $\omega=1+c_2c_4/c_3$. A fact is that the variable metric proximal operator $(Q+\Phi)^{-1}$ is non-expansive, thus it holds 
	   \begin{align*}
	   	\Vert\xi^{k+1}-\eta^{k+1}\Vert_\mH&=
	   	\Vert M(Q+\Phi)^{-1}(Q\xi^k+V^k)-M(Q+\Phi)^{-1}(Q\xi^k)\Vert_\mH\\
	   	&\le c_4 \Vert V^k\Vert_\mH\overset{\eqref{delta}}{\le}
	   	c_4\delta^k\Vert\x^{k+1}-\x^k\Vert_\mH
	   	\le c_4\delta^k\Vert\xi^{k+1}-\xi^k\Vert_\mH,
	   \end{align*}
	   where $c_4=\sqrt{\lambda_{\max}(M^\top\mH M)}/\sqrt{\lambda_{\min}(\mH)}$.
	   Using the basic triangle inequality again, we have
	   \begin{align*}
	   	&\Vert\xi^{k+1}-\mP_{\M^*}^\mH(\eta^{k+1})\Vert_\mH
	   	\le\ \Vert\xi^{k+1}-\eta^{k+1}\Vert_\mH+\dist_\mH(\eta^{k+1},\M^*)\\
	   	\le&\ c_4\delta^k\Vert\xi^{k+1}-\xi^k\Vert_\mH+\dist_\mH(\eta^{k+1},\M^*)\\
	   	\le&\ c_4\delta^k\bigg(\Vert\xi^{k+1}-\mP_{\M^*}^\mH(\eta^{k+1},\M^*)\Vert_\mH+
	   	\Vert\xi^k-\mP_{\M^*}^\mH(\eta^{k+1})\Vert_\mH\bigg)+\dist_\mH(\eta^{k+1},\M^*).
	   \end{align*}
	   Noticing that
	   \begin{equation*}
	   	\Vert\xi^{k+1}-\mP_{\M^*}^\mH(\eta^{k+1})\Vert_\mH\ge\dist_\mH(\xi^{k+1},\M^*).
	   \end{equation*}
	   Combining \eqref{omega} with \eqref{varrho}, we obtain
	   \begin{align*}
	   	&\hspace{-0.5em}(1-c_4\delta^k)\dist_\mH(\xi^{k+1},\M^*)\\
	   	\le\ &\ c_4\delta^k\Vert\xi^k-\mP_{\M^*}^\mH(\eta^{k+1})\Vert_\mH
	   	+\dist_\mH(\eta^{k+1},\M^*)\\
	   	\overset{\eqref{omega}}{\le}&\omega c_4\delta^k\dist_\mH(\xi^k,\M^*)+
	   	\dist_\mH(\eta^{k+1},\M^*)\\
	   	\overset{\eqref{varrho}}{\le}
	   	&\left(\omega c_4\delta^k+\varrho\right)\dist_\mH(\xi^k,\M^*), \ 
	   	\forall \ k\ge\widetilde{k},
	   \end{align*}
	   which means that
	   \begin{equation*}
	   	\dist_\mH(\xi^{k+1},\M^*)\le\underbrace{\frac{\omega 
	   			c_4\delta^k+\varrho}{1-c_4\delta^k}}_{\triangleq\varrho_k}
	   	\dist_\mH(\xi^k,\M^*),
	   	\ \forall \ k\ge\widetilde{k}.
	   \end{equation*}
	   The condition $\sum_{k=0}^\infty\delta^k<\infty$ implies $\lim_{k\to\infty}\delta^k=0$.
	   Consequently, it's obvious that 
	   \begin{equation*}
	   	\varrho_k=\frac{c_4\delta^k+\varrho}{1-c_4\delta^k}\to\varrho<1,
	   	\ \text{as}\ k\to\infty, 
	   \end{equation*}
	   which completes the proof.
	\end{proof}
	Theorem {\bf\ref{THM_3}} shows the linear convergence rate of our proposed DPMM 
	algorithm. If the subproblems are solved exactly by agents, i.e., $\delta^k\equiv0$ for all $k\ge0$, the linear convergence rate can be improved as
	\begin{equation*}
		\mathrm{dist}_\mH(\xi^{k+1},\M^*)\le\varrho\ \mathrm{dist}_\mH(\xi^k,\M^*),
		\quad\forall \ k\ge\tilde{k}.
	\end{equation*}
	\begin{rem}
		The condition $\sum_{i=1}^m\delta_i^k<+\infty$ guarantees that the sequence
		$\{\varepsilon_i^k\}_{k\ge0}$ is summable, hence the convergence results of 
		Theorems {\bf\ref{THM_1}} and {\bf\ref{THM_2}} are still valid for Theorem
		{\bf\ref{THM_3}}. It means that $\xi^k$ converges to some $\xi^\infty \in\M^*$.
		The inexact criterion 
		\begin{equation*}
			\dist(0,\partial_{\x_i}\phi_i^k(\widehat{\x}_i^k,\y_i^k-\gamma_i^k\lambda_i^k))
			\le\delta_i^k\Vert\x_i^{k+1}-\x_i^k\Vert \ \text{with} \ \sum_{i=1}^m\delta_i^k<+\infty
		\end{equation*}
		for computing $\widehat{x}_i^k\approx\mathop{\arg\min}\limits_{\x_i\in\mR^{n_i}}\phi_i^k
		(\x_i,\y_i^k-\gamma_i\lambda_i^k)$ is also considered in 
		{\rm \cite{Rockafellar76, Tao18}}, etc,
		but it is computationally unimplementable, because the computation of $\x_i^{k+1}$ happens after $\widehat{\x}_i^k$. From Theorem {\bf\ref{THM_1}},
		$\x^k$ converges to an optimal solution $\x^\infty$ of \eqref{P}, hence
		$\lim_{k\to\infty}\left(\Vert\x_i^{k+1}-
		\x_i^k\Vert-\Vert\x_i^k-\x_i^{k-1}\right)=0$. Therefore, we can use $\Vert\x_i^k-\x_i^{k-1}\Vert$ instead of $\Vert\x_i^{k+1}-\x_i^k\Vert$ in practice.
	\end{rem}
	
	\section{Numerical Simulation}\label{sec_5}
	In this section, we will compare the practical performance of the DPMM algorithm proposed in this paper with some alternative algorithms listed in Table 1 using two numerical examples. Since some of these algorithms cannot manage coupled nonlinear inequality constraints, we split the numerical simulation into two parts.
	\begin{figure}[htbp]
		\centering
		\includegraphics[scale=0.4]{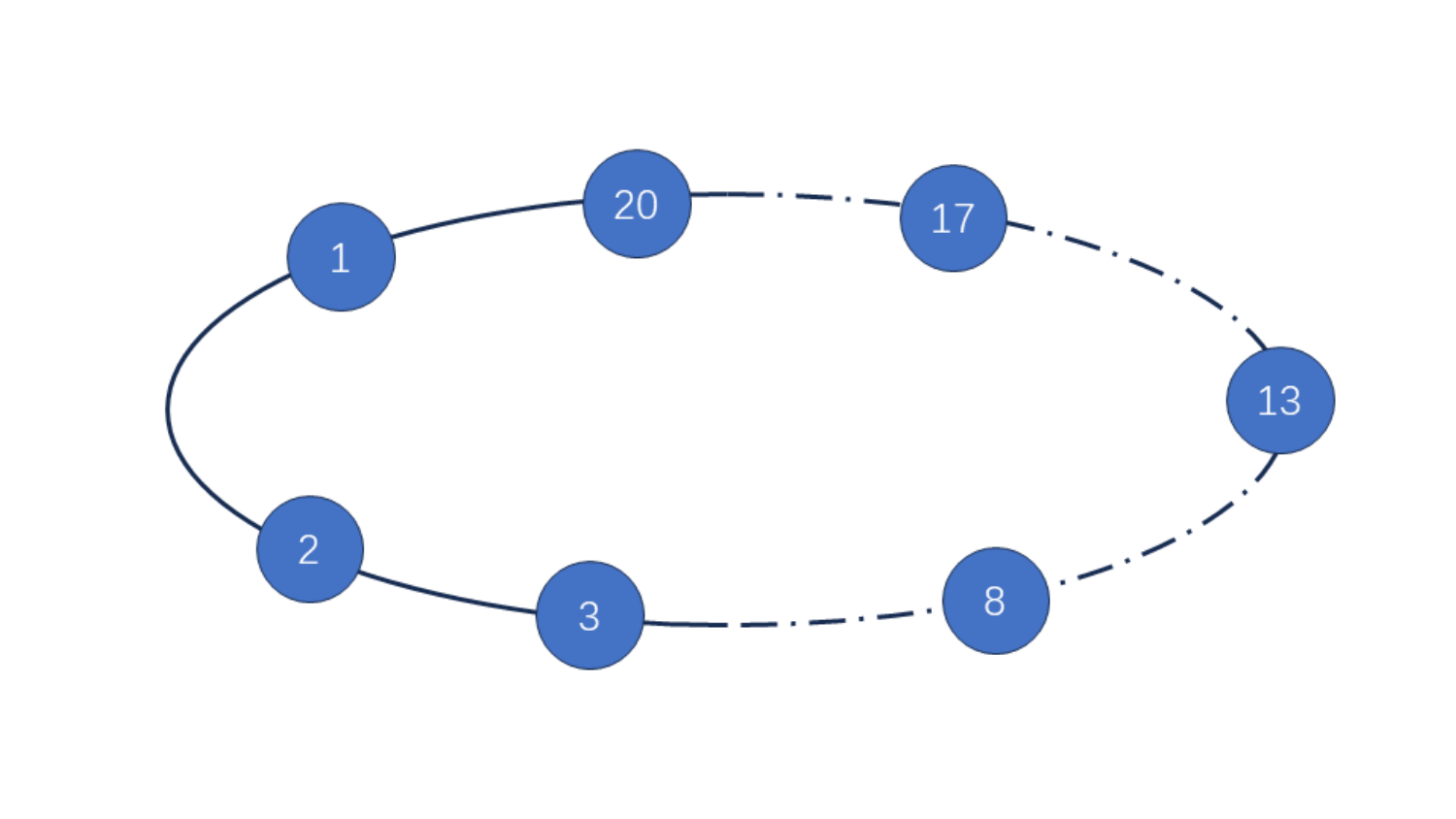}
		\caption{\ Network topology of $20$ agents}
		\label{fig_1}
	\end{figure}
	
	\emph{Network}: The experiments are conducted over a fixed undirected
	connected graph $\G$ which is shown in Fig. {\bf\ref{fig_1}}. The
	adjacency matrix $W$ is constructed as
	\begin{equation*}
		W_{ij}=\left\{
		\begin{aligned}
			\frac{1}{{\rm deg}(i)+1}&,\ \text{if}\ i\neq j, (i,j)\in\E,\\
			0\hspace{2em}&,\ \text{if} \ i\neq j, (i,j)\notin\E,\\
			1-\sum_{j\in\N_i}W_{ij}&, \ \text{if}\ i=j,
		\end{aligned}
		\right.
	\end{equation*}
	where $\mathrm{deg}(i)$ is the degree of agent $i$. For DPMM, we use
	$\text{\L}=(I-W)/2$, and simply set 
	parameters $\theta_i\equiv\theta$, $\alpha_i\equiv\alpha$, 
	$\gamma_i\equiv\gamma$ in their theoretical ranges presented in 
	Proposition {\bf\ref{PROP_1}}.
	
	\emph{Subproblem}: The following two examples are both constrained convex optimization 
	problems with $\ell_1$-norm regularization. If we use DPMM and other  
	alternative algorithms listed in Table {\bf\ref{table_1}} to solve them, the objectives of their subproblems have a unified form: 
	$\psi_i(\x_i)=s_i(\x_i)+\lambda_i\Vert\x_i\Vert_1+\delta_{\Omega_i}(\x_i)$, where
	$\delta_{\Omega_i}(\x_i)$ is the indicator function on a box subset $\Omega_i$, and
	$s_i(\x_i)$ is the smooth part of $\psi_i$. A fact is that both the proximal operators of the functions $\delta_{\Omega_i}(\cdot)$ and $\Vert\cdot\Vert_1$ have closed-form solutions. We then adopt the Davis-Yin splitting algorithm \cite{D-Y17} to minimize $\psi_i(\x_i)$. The subdifferential $\partial\psi_i(\x_i)=\nabla 
	s_i(x_i)+\lambda_i\partial\Vert\x_i\Vert_1+\mathrm{N}_{\Omega_i}(\x_i)$,
	where the sum $\lambda_i\partial\Vert\x_i\Vert_1+\mathrm{N}_{\Omega_i}(\x_i)$ is
	a polyhedral set, precisely, $\lambda_i\partial\Vert\x_i\Vert_1+\mathrm{N}_{\Omega_i}(\x_i)=[L_i,U_i]$, the values 
	of $L_i$ and $U_i$ depend on $\x_i$. At each iteration $k$, we use the following criterion (cf. \eqref{subproblem}) as the stopping condition to calculate $\x_i^{k+1}$ such that $\x_i^{k+1}=\arg\min\psi_i^k(\x_i)=s_i^k(\x_i)+
	\lambda_i\Vert\x_i\Vert_1+\delta_{\Omega_i}(\x_i)$,
	\begin{equation*}
		\dist(0,\partial\psi_i^k(\x_i^{k+1}))=
		\left\Vert-\nabla s_i^k\left(\x_i^{k+1}\right)-\mP_{[L_i^k,U_i^k]}\left(-\nabla s_i^k(\x_i^{k+1})\right)\right\Vert_\infty\le\varepsilon_i^k.
	\end{equation*}
	We set $\varepsilon_i^k\equiv10^{-10}$ for the alternative algorithms in Table 
	{\bf\ref{table_1}}. The algorithm proposed in this paper is denoted by
	DPMM-$\varepsilon_i^k$ with some value of $\varepsilon_i^k$.
	The optimal value $F^*$ and the optimal solution $\x^*$ (if unique) of these two  
	examples are calculated by the convex optimization toolbox CVX with the precision 
	tuned to `best'.
	\subsection{Example 1 }
	Considering the following example of \eqref{P} with $m=20$, $n_i=p=3$, 
	$\forall \ i\in\V$, $q=0$,
	\begin{equation}\label{example_1}
		\begin{aligned}
			&\min_\x F(\x)=\sum_{i=1}^m\bigg(\log\left(1+\exp\left(a_i^\top\x_i\right)\right)
			+\lambda_i\Vert\x_i\Vert_1\bigg)\\
			&{\rm s.t.}\quad \sum_{i=1}^mA_i\x_i=0,
			\ l_i\le\x_i\le u_i,\ \forall i\in\V,
		\end{aligned}
	\end{equation}
	where data $a_i, A_i, l_i, u_i$ are randomly generated, $\lambda_i=i/m^2$. This linearly constrained logistic regression problem with 
	$\ell_1$-norm regularization is often encountered in machine learning.
	
	We execute the DPMM-$\varepsilon_i^k$ with $\varepsilon_i^k=10^{-10}$, Tracking-ADMM \cite{Falsone20}, PDC-ADMM \cite{Chang16}, and NECPD \cite{Su22} for example \eqref{example_1} on a standard PC. All the algorithm parameters are fine-tuned in their theoretical ranges. We let all the algorithms start from the same initial decision vector. 
	
	In Fig. {\bf\ref{fig_2}}, we plot the objective residuals and the constraint violations generated by the aforementioned four algorithms during 1000 iterations.
	The objective residual is defined as $\vert F(\x^k)-F^*\vert/\vert F^*\vert$.
	The constraint violation is $\Vert\sum_{i=1}^mA_i\x_i^k\Vert_{\infty}$.
	\begin{figure}[htbp]
		\centering
		\subfigure[Evolution of the objective residuals.]
		{
			\includegraphics[width=0.48\textwidth]{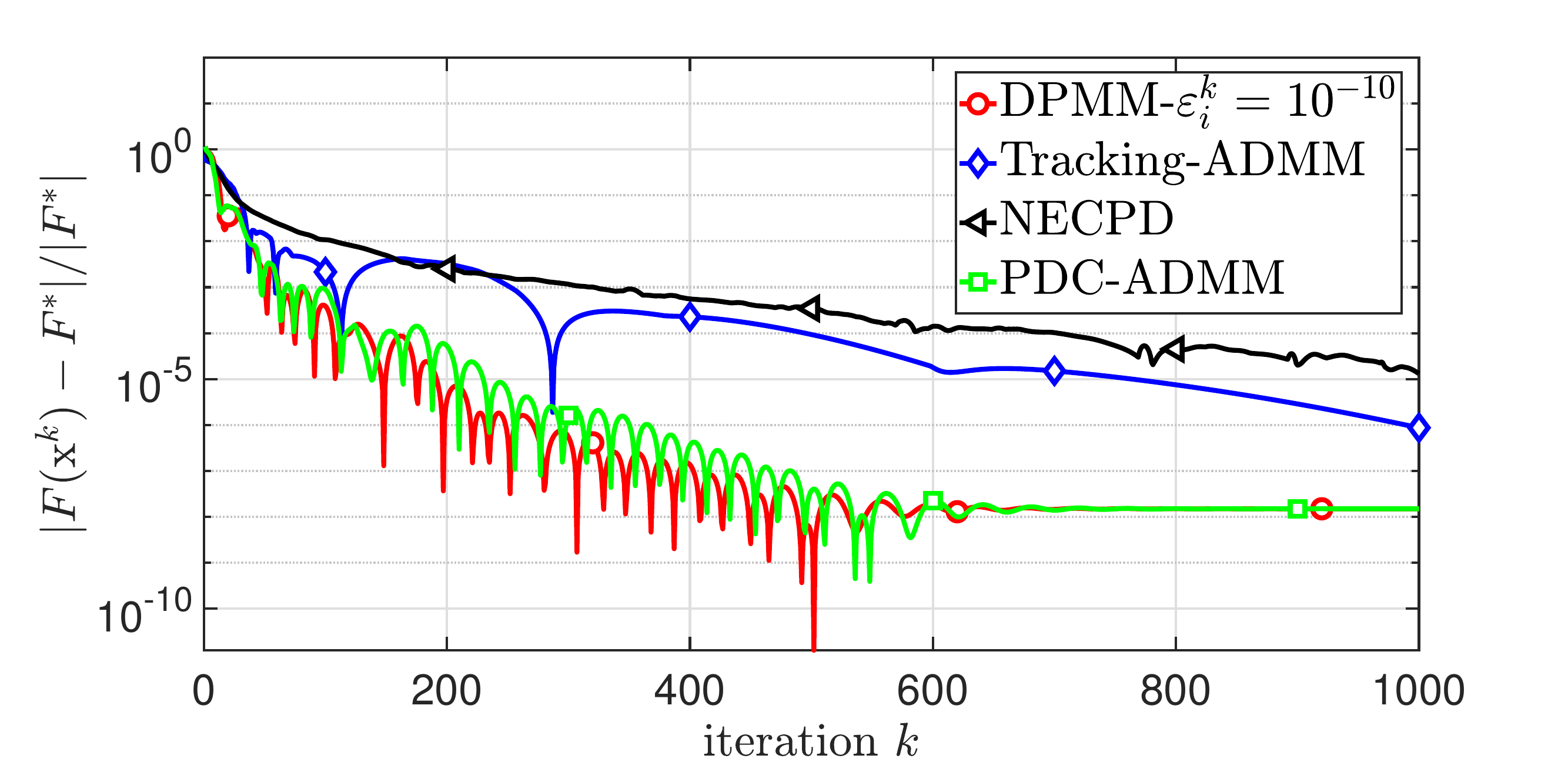}
			\label{fig_2a}
		}
		\subfigure[Evolution of constraint violations.]
		{
			\includegraphics[width=0.48\textwidth]{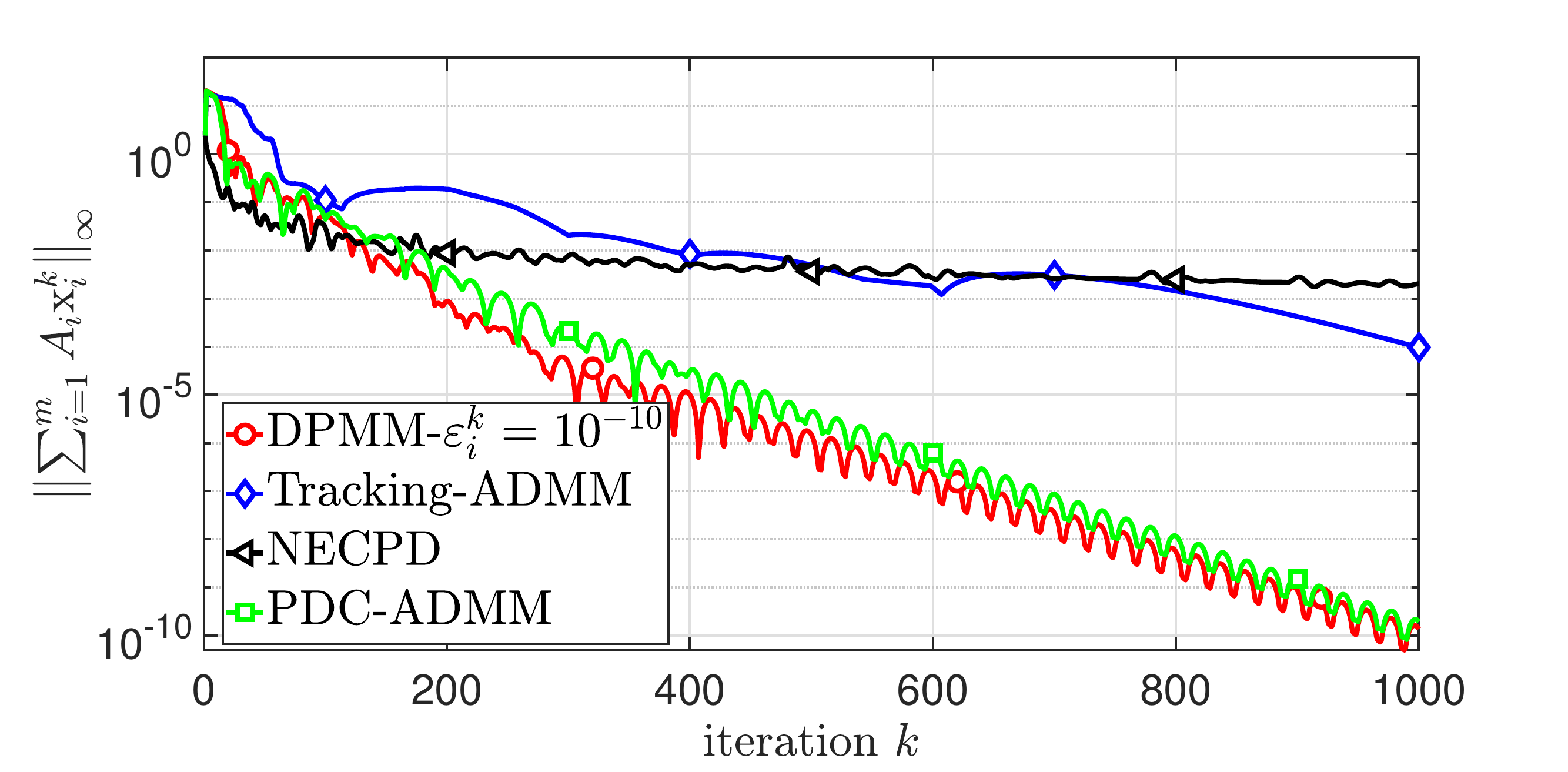}
			\label{fig_2b}
		}
		\caption{Convergence performance of algorithms}
		\label{fig_2}
	\end{figure}
	
	In Fig. {\bf\ref{fig_2}}, the Tracking-ADMM and NECPD algorithms exhibit near convergence speed for solving Example \eqref{example_1}. It is well-known that the quadratic proximal term $\frac{\gamma^k}{2}\Vert\x_i-\x_i^k\Vert^2_2$ of the objective of a subproblem brings significant benefits to the convergence of algorithms. However, the coefficient $\gamma^k$ of the NECPD algorithm tends to infinity at a rate of $O(k)$. As the algorithm runs, a large value of $\gamma^k$ may slow down its convergence speed. Tracking-ADMM incorporated the idea of gradient-tracking \cite{DIGing}, where the connected graph with few edges may delay the consensus of the variable values it tracks, thereby leading to slow convergence.
	In contrast, over a low connectivity graph ($20$ nodes and $20$ edges),
	DPMM and PDC-ADMM demonstrate a fast convergence speed with respect to optimality and feasibility.
	
	In communication costs, DPMM only requires one round of communication to
	exchange agents' estimates $\y_i^k$ of the Lagrange multiplier 
	at each iteration. Tracking-ADMM also communicates one round but exchanges two variable values. The number of communication rounds per iteration for PDC-ADMM and 
	NECPD are $2$ and $d-1$, respectively, where $d$ is the degree of the minimal 
	polynomial of the adjacency matrix $W$. Communication-wise, DPMM is more efficient than these three algorithms. 
	
	\subsection{Example 2}
	Considering the following example of \eqref{P} with $m=20$, $n_i=p=3$, $q=1$,
	\begin{equation}\label{example_2}
		\begin{aligned}
			&\min_\x F(\x)=\sum_{i=1}^m\bigg
			(\frac{1}{2}\Vert C_i\x_i-d_i\Vert_2^2+\lambda_i\Vert\x_i\Vert_1\bigg)\\
			&{\rm s.t.}\quad \sum_{i=1}^mA_i\x_i=b,
			\ l_i\le\x_i\le u_i, \forall \ i\in\V.\\
			&\hspace{2.2em}\sum_{i=1}^m\log\left(1+\exp\left(a_i^\top\x_i\right)\right)
			\le f.     
		\end{aligned}
	\end{equation}
	This constrained LASSO problem is also considered by \cite{WuXY22}.
	Each matrix $C_i$ is symmetric positive definite and randomly generated.
	$b=\sum_{i=1}^mA_i\xi_i$ with $\xi_i$ being randomly chosen from a uniform distribution over the box $[l_i,u_i]$. To ensure Assumption {\bf\ref{ass_3}} be satisfied, we take $f>\sum_{i=1}^m\log\left(1+\exp\left(a_i^\top\xi_i\right)\right)$.
	$\lambda_i=i/m^2$, $A_i,\ a_i,$ and $d_i$ are randomly generated, $\forall\ i=1,2,\dots,m$.
	
	We run the IPLUX algorithm \cite{WuXY22} and DPMM-$\varepsilon_i^k$ with $\varepsilon_i^k=10^{-10}, 1/k^2$, and $1/k^{1.2}$ on a standard PC to solve example \eqref{example_2}. Again, we choose the same initial decision vector $\x^0$ for both algorithms and then compare the practical convergence performance with their parameters tuned within respective theoretical ranges.
	
	\begin{figure}[htbp]
		\centering
		\subfigure[Evolution of objective residual.]
		{
			\includegraphics[width=0.48\textwidth]{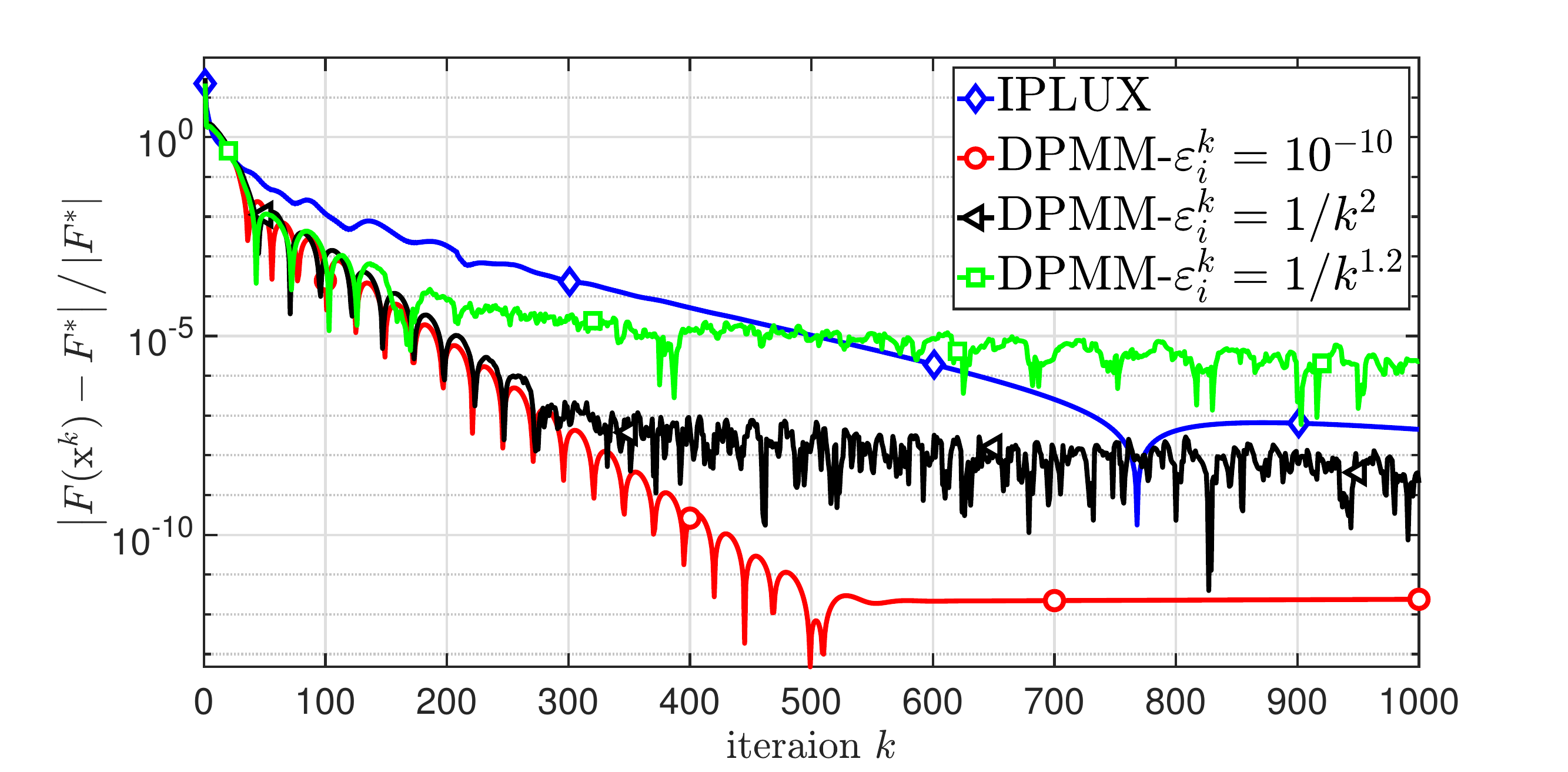}
			\label{fig_3a}
		}
		\subfigure[Evolution of constraint violation.]
		{
			\includegraphics[width=0.48\textwidth]{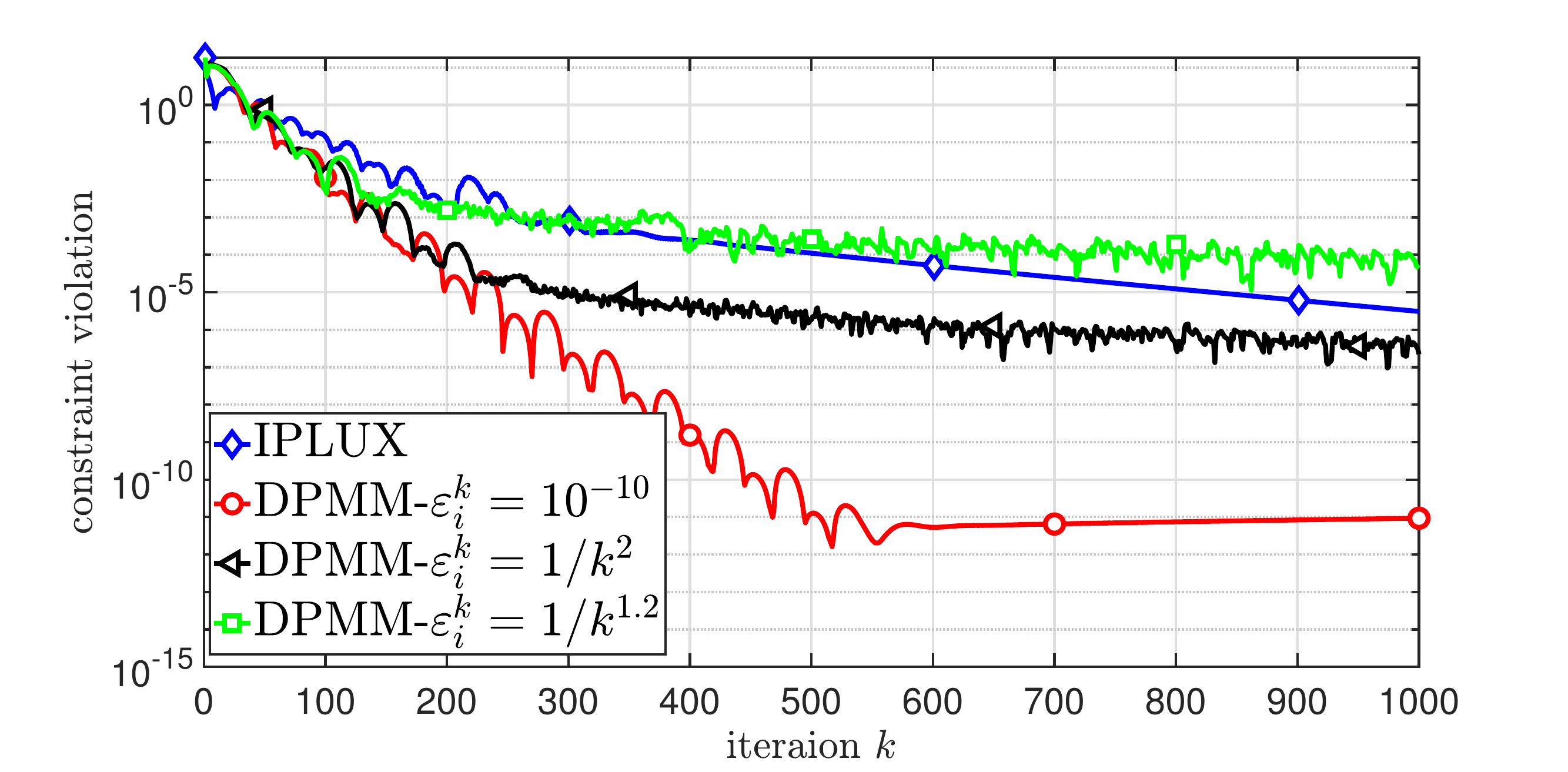}
			\label{fig_3b}
		}
		\subfigure[Evolution of optimality error.]
		{
			\includegraphics[width=0.7\textwidth]{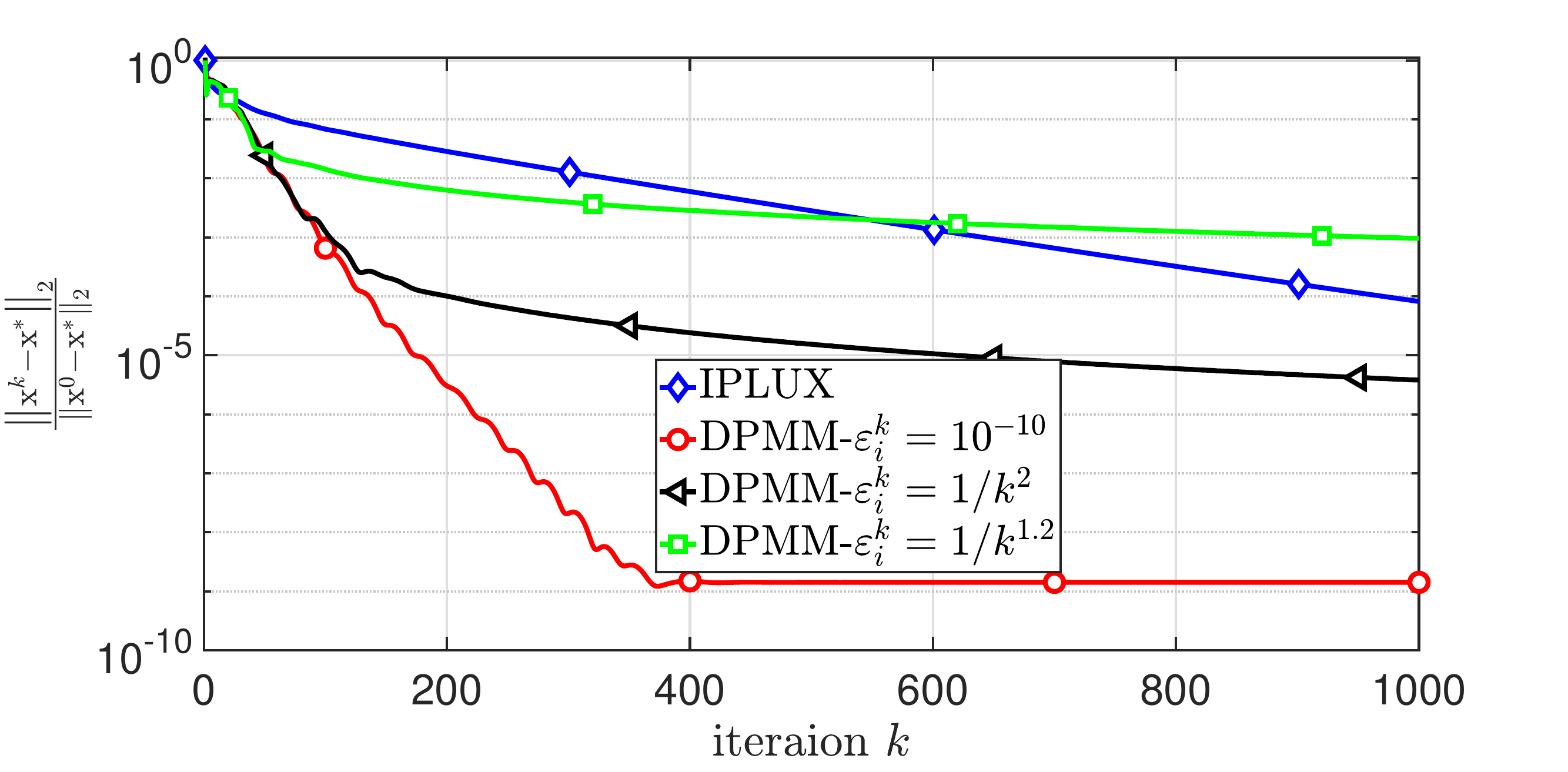}
			\label{fig_3c}
		}
		\caption{Convergence performance of algorithms}
		\label{fig_3}
	\end{figure}
	
	In Fig. {\bf\ref{fig_3}}, we plot the objective residual, constraint violation,
	and optimality error of running DPMM-$\varepsilon_i^k$ and IPLUX for 1000 iterations. The constraint violation is the sum of $\Vert\sum_{i=1}^mA_i\x_i^k-b\Vert_{\infty}$ and
	$\max\big\{\sum_{i=1}^m\log\left(1+\exp\left(a_i^\top\x_i\right)
	\right)-f,0\big\}$. The optimality error is 
	$\Vert\x^k-\x^*\Vert_2/\Vert\x^0-\x^*\Vert_2$.
	
	As illustrated in Fig. {\bf\ref{fig_3a}} and Fig. {\bf\ref{fig_3b}}, the IPLUX algorithm effectively solves Example 2, reaching a theoretical convergence rate of $O(1/k)$ concerning the objective residuals and constraint violations, while Fig. 
	{\bf\ref{fig_3c}} shows the slow progression of its decision variable sequence $\{\x_k\}$ towards the optimal solution $\x^*$. It can be summarized from Fig. {\bf\ref{fig_3}} that the DPMM-$\varepsilon_i^k$ algorithm has a significant convergence speed advantage over IPLUX for solving the constrained LASSO problem \eqref{example_2}, in terms of the optimal errors, objective residuals, and constraint violations. 
	
	Fig. {\bf\ref{fig_3}} also indicates that the DPMM-$\varepsilon_i^k$ can be applied to a wider range of practical optimization problems, as we can see, with a dynamic precision $\varepsilon_i^k=1/k^2$ to compute the subproblem, running the DPMM-$\varepsilon_i^k$ for about 500 iterations yields a trial solution reaching an accuracy of $10^{-5}$, in terms of the objective residual, constraint violation, and optimality error. Of course, the faster the computational error $\varepsilon_i^k$ of the subproblem decays to 0, the faster the DPMM-$\varepsilon_i^k$ algorithm converges, which is also reflected in Fig. {\bf\ref{fig_3}}.
	
	The algorithmic parameter $\tau=\alpha+\gamma\lambda^2$ of IPLUX is influenced by the objective and constraint functions of the solved optimization problem, i.e., 
	$\alpha\ge L_f+L^2$ and $\lambda\ge\Vert A^s\Vert_2$, where $L_f$ is the gradient Lipschitz constant of the smooth part of the objective function, $L_h$ is the Lipschitz constant of the coupled inequality constraint function $\sum_{i=1}^mh_i(\x_i)$, and $A^s$ is the matrix in the sparse coupled linear constraint considered by IPLUX (which is merged with the global coupled constraint $\sum_{i=1}^mA_i\x_i=b$ in this paper). This means that IPLUX requires some prior knowledge of the solved optimization problems, e.g., knowledge of these Lipschitz constants mentioned above, which may need all agents' communication to achieve.  However, this concern does not exist for DPMM, reviewing Proposition {\bf\ref{PROP_1}} and Remark {\bf\ref{rem_2}}, the parameter selections of DPMM can be done without any knowledge of the Lipschitz constants associated with the solved optimization problem and the network structure of the connected undirected graph, which implies that DPMM is robust. 
	
	Communication-wise, recall that agents exchange the Lagrange multiplier estimates
	$\widehat{\y}_i^k$ once per iteration in the DPMM algorithm, while IPLUX 
	doubles such communication costs, which indicates that DPMM is more efficient.
	
	\section{Conclusion}\label{sec_6}
	In this paper, we have developed a distributed proximal method of multipliers, 
	referred to as DPMM, to address coupled constrained convex optimization problems over 
	a fixed undirected connected network. We demonstrate its primal-dual convergence and 
	an $o(1/k)$ rate for the first-order optimality residual under general convexity. Furthermore, under the structural assumption, DPMM converges linearly. Numerical simulations reveal that the proposed DPMM algorithm is efficient for \eqref{P}, comparing with some alternative distributed constrained optimisation algorithms.
	
	\bibliographystyle{unsrt}
	\bibliography{ref_DPMM}
\end{document}